\tikzset{
    arrowMe/.style={
        postaction=decorate,
        decoration={
            markings,
            mark=at position .5 with {\arrow{#1}}
        }
    }
}
\newtheorem{theorem}{Theorem}[section]
\newtheorem{lemma}[theorem]{Lemma}
\newtheorem{remark}[theorem]{Remark}
\newtheorem{convention}[theorem]{Convention}
\newcommand{\C}{\mathcal{C}}
\newcommand{\Mod}{{\rm Mod}}
\newcommand{\ol}{\overline}
\newcommand{\T}{\mathcal{T}}
\newcommand{\G}{\mathcal{G}}
\newcommand{\R}{\mathbb{R}}
\newcommand{\Ho}{\mathcal{H}}
\newcommand{\base}{\text{base}}
\newcommand{\simp}{\bigtriangleup^2}
\newcommand{\cL}{\mathcal{L}}
\newcommand{\Ext}{{\rm Ext}}
\newcommand{\Hyp}{{\rm Hyp}}
\def\co{\colon\thinspace}
\newcommand{\hgline}[2]{
\pgfmathsetmacro{\thetaone}{#1}
\pgfmathsetmacro{\thetatwo}{#2}
\pgfmathsetmacro{\theta}{(\thetaone+\thetatwo)/2}
\pgfmathsetmacro{\phi}{abs(\thetaone-\thetatwo)/2}
\pgfmathsetmacro{\close}{less(abs(\phi-90),0.0001)}
\ifdim \close pt = 1pt
    \draw[black] (\theta+180:1) -- (\theta:1);
\else
    \pgfmathsetmacro{\R}{tan(\phi)}
    \pgfmathsetmacro{\distance}{sqrt(1+\R^2)}
    \draw[black] (\theta:\distance) circle (\R);
\fi
}
\newcommand{\hglinewhite}[2]{
\pgfmathsetmacro{\thetaone}{#1}
\pgfmathsetmacro{\thetatwo}{#2}
\pgfmathsetmacro{\theta}{(\thetaone+\thetatwo)/2}
\pgfmathsetmacro{\phi}{abs(\thetaone-\thetatwo)/2}
\pgfmathsetmacro{\close}{less(abs(\phi-90),0.0001)}
\ifdim \close pt = 1pt
    \draw[black] (\theta+180:1) -- (\theta:1);
\else
    \pgfmathsetmacro{\R}{tan(\phi)}
    \pgfmathsetmacro{\distance}{sqrt(1+\R^2)}
    \fill[white] (\theta:\distance) circle (\R);
\fi
}
\begin{document}

\title{Exotic limit sets of  Teichm\"uller \\ geodesics in the HHS boundary}
\author{Sarah C. Mousley}
\date{}

\maketitle

\abstract{We answer a question of Durham, Hagen, and Sisto, proving that a Teichm\"uller geodesic ray does not necessarily converge to a unique point in the hierarchically hyperbolic space boundary of Teichm\"uller space. In fact, we prove that the limit set can be almost anything allowed by the topology.

\noindent \textbf{MSC 2010 Subject Classification:} 30F60, 32Q05 (primary), 57M50 (secondary) }

\section{Introduction} \label{sec:intro}

Let $S=S_g$ be a connected, closed, orientable surface of genus  $g \geq 2$, and let
$\T(S)$ denote the Teichm\"uller space of $S$ equipped with the Teichm\"uller metric. Masur \cite{Masur2} proved that $\T(S)$ is not non-positively curved in the sense of Busemann, and  Masur and Wolf \cite{MasurWolf} showed that $\T(S)$ is not (Gromov) hyperbolic. 
In this paper, we explore to what extent $\T(S)$ has features of negative curvature by studying the asymptotic behavior of geodesics.

The notion of Gromov boundary can be  generalized  in several ways to obtain a boundary for $\T(S)$. For example the visual boundary and the Morse boundary agree with the Gromov boundary when the space is hyperbolic, and these boundaries are well-defined for $\T(S)$ (see \cite{McPap} and \cite{Cordes}). Here we consider another generalization. 
Both hyperbolic spaces and $\T(S)$ can be equipped with a geometric structure defined by Behrstock, Hagen, and Sisto \cite{BHS} called a hierarchically hyperbolic space (HHS) structure. (That $\T(S)$ can be equipped with such a structure follows from the results in \cite{Durham}, \cite{EMR},\cite{MMII}, \cite{Rafi_comb}.)
These structures were used by Durham, Hagen, and Sisto \cite{DHS} to construct a boundary, which we will call the HHS boundary (see Section \ref{sec:background} for definitions). 

Working in the HHS paradigm, 
the question  becomes how do the asymptotics of  geodesic rays in the HHS boundary of $\T(S)$ compare to those of geodesic rays in the HHS boundary of a hyperbolic space? 
The identity map on a hyperbolic space extends to a homeomorphism between its HHS and Gromov boundaries, so certainly in this case
geodesic rays are well-behaved.
In \cite{DHS} Durham, Hagen, and Sisto asked for a description of limit sets of Teichm\"uller geodesic rays in the HHS boundary.  Our main result provides an answer to this question.

\begin{theorem} \label{theorem:main}
Given a continuous map $\gamma \co \mathbb{R} \rightarrow \bigtriangleup^2$ to the standard 2-simplex,  there exists  a Teichm\"uller geodesic  ray $\G$ in $\T(S_3)$ and an embedding of $\simp$ into the HHS boundary of $\T(S_3)$  such that the limit set of $\mathcal{G}$ in the HHS boundary  is the image of $ \ol{\gamma(\mathbb{R})}.$
\end{theorem}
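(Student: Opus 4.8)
The plan is to realize the prescribed limit set by engineering a Teichmüller geodesic ray whose behavior in the HHS boundary is controlled coordinate-by-coordinate across a carefully chosen collection of subsurfaces. The HHS boundary of $\T(S_3)$ is built from boundaries of the hyperbolic spaces $\C(W)$ associated to subsurfaces $W$, glued together via the simplicial structure that records which subsurfaces overlap versus which are nested or disjoint. Since I want the limit set to be a $2$-simplex, I would first fix three pairwise-disjoint subsurfaces $W_0, W_1, W_2 \subset S_3$ (for instance three disjoint one-holed tori, which the genus-$3$ surface comfortably accommodates), so that the three corresponding curve-complex boundary points $p_0, p_1, p_2$ span a genuine $2$-simplex $\simp$ in the HHS boundary. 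The weights on this simplex correspond to the relative rates at which the projected subsurface coordinates of a point diverge, so a point of $\simp$ with barycentric coordinates $(t_0, t_1, t_2)$ is the boundary point recording limiting direction $p_i$ in each $\C(W_i)$ with relative speed $t_i$.

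The heart of the construction is then to build a single Teichmüller geodesic $\G$, say via a quadratic differential, whose vertical foliation restricts on each $W_i$ to a fixed minimal filling lamination determining $p_i$, but whose subsurface projections $\pi_{W_i}(\G(s))$ progress at rates whose ratios trace out exactly the curve $\gamma(\mathbb{R})$ in $\simp$. The natural tool here is Rafi's combinatorial description of Teichmüller geodesics together with the distance formula: the coordinate $d_{\C(W_i)}(\pi_{W_i}(\mu), \pi_{W_i}(\G(s)))$ grows roughly linearly in the amount of time $\G$ spends ``active'' in $W_i$, and by choosing a quadratic differential whose flat structure allocates its thin parts among the $W_i$ according to a prescribed schedule, I can prescribe these growth rates as (approximately) arbitrary nondecreasing functions of $s$. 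The key point is that the limit set in the boundary is precisely the set of accumulation points of the normalized coordinate vector $\big(d_{\C(W_0)}, d_{\C(W_1)}, d_{\C(W_2)}\big)/(\text{sum})$ as $s \to \infty$, so I must arrange these three growth functions so that this normalized vector accumulates on exactly $\ol{\gamma(\mathbb{R})}$ and nothing more. For that I would discretize $\gamma$ and interpolate: cut $\mathbb{R}$ into intervals, on each interval drive the flat structure toward the corresponding barycentric target, and control the transitions so no spurious accumulation points appear. I would need the embedding of $\simp$ into the boundary to be the one sending $(t_0,t_1,t_2)$ to the boundary point with those coordinates; then $\ol{\gamma(\mathbb{R})}$ maps exactly to the limit set.

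The main obstacle I anticipate is twofold and genuinely analytic rather than combinatorial. First, I must show that the normalized coordinate vector can be made to follow an \emph{arbitrary} continuous path and yet still converge (in the appropriate boundary topology) rather than merely having the right accumulation set in $\simp$ — the subtlety is that convergence in the HHS boundary is not simply convergence of the normalized coordinates, so I must verify that the other gate conditions defining the DHS topology (stabilization of all other subsurface coordinates, control of ``remote'' boundary components) are satisfied along $\G$. This forces me to ensure that outside the three chosen $W_i$, the geodesic's subsurface projections stay bounded, which constrains the quadratic differential: its singular flat geometry must keep all curves \emph{not} supported in $\bigcup W_i$ from getting short, so that no extraneous coordinates contribute to the limit set. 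Second, realizing prescribed growth-rate ratios exactly by a genuine Teichmüller geodesic — as opposed to a mere quasi-geodesic or hierarchy path — requires the existence of an actual quadratic differential whose Teichmüller flow produces those thin-part schedules; I expect this to demand a careful twisting-and-stretching construction (in the spirit of the examples producing non-unique-ergodic foliations) and a limiting argument to pass from piecewise-linear approximations of $\gamma$ to the continuous map itself, with the closure $\ol{\gamma(\mathbb{R})}$ appearing precisely because accumulation points of the schedule fill in the limits of the approximants.
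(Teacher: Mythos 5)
Your proposal shares the paper's architecture: three disjoint one-holed tori in $S_3$ (the paper builds them as slitted unit squares glued along vertical slits), an embedding of $\simp$ sending $(c_0,c_1,c_2)$ to $\sum_i c_i\eta_i$ with $\eta_i \in \partial\C(Y_i)$, and the reduction to making the normalized vector of subsurface-projection distances accumulate exactly on $\ol{\gamma(\R)}$, with a discretize-and-interpolate scheme driven by a Lenzhen-type non-uniquely-ergodic construction. But two steps, as you state them, are genuinely gapped. First, your plan to handle extraneous coordinates by keeping ``all curves not supported in $\bigcup W_i$ from getting short'' would fail for the curves that matter most: the boundary curves $\beta_i = \partial W_i$. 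In any construction of this type they are closed leaves of the vertical foliation, their flat length decays like $e^{-t}$, and they remain short for all time (the paper proves $\Ext_t(\beta_i) \leq \epsilon_0/e$ for every $t$, Lemma \ref{lemma:ext_beta_small}). The correct mechanism is not to prevent shortness but to exploit the fact that in this HHS structure the annular coordinates are combinatorial horoballs $\Ho_{\beta_i}$: the paper shows $d_{\Ho_{\beta_i}}(X_0,X_t) \stackrel{+}{\prec} \log t$ (Lemma \ref{lemma:horoproj}, via the modulus estimates of Minsky and Rafi and the twisting bound of Choi--Rafi--Series), and then chooses the data so that this logarithmic progress is dwarfed by the linear progress in each $\C(Y_i)$ (Lemma \ref{lemma:small_horo_curve_graph_ratios}). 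Without this step the limit set could a priori contain points with positive weight on $\partial\Ho_{\beta_i}$, and your stated condition is unachievable.

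Second, the core quantitative device --- how one actually prescribes the ratios of growth rates by a genuine Teichm\"uller geodesic --- is asserted rather than supplied. The paper's mechanism is concrete: take $\theta_i$ whose continued fraction expansion consists of long blocks of a repeated entry $\theta_i(j)$; during block $j$ the projection to $\C(Y_i)$ advances one unit per convergent (Lemma \ref{lemma:curvegraph_progress}) while each convergent costs Teichm\"uller time $\log\lambda(\theta_i(j))$ (Lemma \ref{lemma:heavylifting}), so the speed in $\C(Y_i)$ is $[\log\lambda(\theta_i(j))]^{-1}$, and the entries are chosen so the normalized triple of speeds approximates $\gamma(t_j)$ along a dense sequence $(t_j)$. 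Moreover, your ``control the transitions so no spurious accumulation points appear'' is precisely where the real difficulty lies: the three tori must switch blocks at coarsely the same times, which the paper achieves by an inductive choice of block lengths forcing the balance-time interleaving $T^i_{N_i(k)-3} < T^0_{N_0(k)-1} \leq T^i_{N_i(k)-1}$ (Lemma \ref{lemma:balance_time_orders}), after which Lemma \ref{lemma:bound_comp_phi} shows the normalized vector stays within $11\epsilon_k$ of $\gamma(t_k)$ on the whole $k$-th time window. Absent a synchronization device of this kind, prescribing three growth functions independently does not preclude time intervals on which the normalized vector wanders away from $\gamma(\R)$, producing exactly the spurious accumulation points you need to exclude.
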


The study of limiting behaviors of Teichm\"uller geodesic rays began with Kerckhoff \cite{Kerckhoff}. 
He proved that the  Teichm\"uller boundary of $\T(S)$ (the collection of all geodesic rays emanating from a fixed basepoint) is basepoint dependent. Since then, 
the  limit sets of geodesic rays in Thurston's compactification of $\T(S)$ by $\mathcal{PMF}(S)$, the space of projectivized measured foliations, have received much attention. Masur \cite{Masur3} showed that almost all Teichm\"uller geodesic rays converge to a unique point in $\mathcal{PMF}$. Lenzhen \cite{Lenzhen} provided the first example of a  geodesic ray whose limit set in $\mathcal{PMF}$  is more than one point. The study of limit sets in $\mathcal{PMF}$ continued in  \cite{CMW} and  \cite{LLR}, where the influence of the topological and dynamical properties of the associated vertical foliation is studied, and in \cite{BLMR} and \cite{LMR}, where rays with limits sets homeomorphic to a  circle and 2-simplex are constructed, respectively.
It would be interesting to know whether the kind of behavior we produce in Theorem \ref{theorem:main} can occur in $\mathcal{PMF}$. 
\

\textbf{Strategy for proving Theorem \ref{theorem:main}:} To build Teichm\"uller geodesic rays, we use a construction first described by Masur and Tabachnikov \cite{MT} and  used by  Lenzhen \cite{Lenzhen} and Lenzhen, Modami, Rafi \cite{LMR} to study limit sets of Teichm\"uller geodesics in Thurston's compactification. 
Given irrational numbers $\theta_0, \theta_1, \theta_2$ and $0<s<1$, for each $i=0,1,2$
cut a slit of length $s$ and slope $\theta_i$  in  a unit square $R_i$. For each $R_i$, identify its parallel sides to form a  torus with one boundary component. Then identify the left side of the slit in $R_i$ with the right side of the slit in $R_{i-1}$ (indices mod 3). This produces a genus $3$ translation surface, yielding a complex structure $X$  and a  quadratic differential $q$ with respect to $X$ (see Figure \ref{glued_up}). 
 We consider the Teichm\"uller geodesic ray  corresponding to $(X,q)$.
Given a continuous map $\gamma: \R \rightarrow \simp$, we will show how to construct irrational numbers $\theta_0, \theta_1, \theta_2$ and an embedding of  $\simp$ into the HHS boundary of $\T(S_3)$ so that the limit set  of the corresponding Teichm\"uller geodesic ray is the image of $\ol{\gamma(\R)}$.

\begin{figure} 
 \center
\begin{tikzpicture}[scale=1.5]
\draw (0,0)--(1,0)--(1,1)--(0,1)--(0,0); 
\draw (1.5,0)--(2.5,0)--(2.5,1)--(1.5,1)--(1.5,0); 
\draw (.75,-1.5)--(1.75,-1.5)--(1.75,-.5)--(.75,-.5)--(.75,-1.5);


\node (R0) at (.5, -.15) {$R_0$};
\node (R1) at (2, -.15) {$R_1$}; 
\node (R2) at (1.25, -1.65) {$R_2$}; 

\coordinate (s1bottom) at (0.29721, 0.210386){};
\coordinate (s1top) at (0.70279, 0.789614){};

\coordinate (s2bottom) at (1.75, .25) {};
\coordinate (s2top) at (2.25,.75) {};  

\coordinate (s3bottom) at (0.901818, -1.06139){};
\coordinate (s3top) at  (1.59818,-0.938606){};

\begin{scope}[decoration={
    markings,
    mark=at position 0.5 with {\arrow{>}}}, 
    ] 
\draw [postaction={decorate}] (s1bottom) to [out=80, in=210] (s1top); 
\draw [postaction={decorate}] (s3bottom) to [out=-15, in=215] (s3top); 
\end{scope}

\draw[->] (2.9, -.25) to [out=20, in=160](3.9, -.25);
\node at (3.4, 0) {\small glue}; 

\begin{scope}[decoration={
    markings,
    mark=at position 0.5 with {\arrow{>>}}}, 
    ] 
\draw [postaction={decorate}] (s1bottom) to [out=30, in=255] (s1top); 
\draw [postaction={decorate}] (s2bottom) to [out=70, in=200] (s2top); 
\end{scope}


\begin{scope}[decoration={
    markings,
    mark=at position 0.55 with {\arrow{>>}}}, 
    ] 
\draw [postaction={decorate}] (s2bottom) to [out=20, in=245] (s2top); 
\draw [postaction={decorate}] (s3bottom) to [out=35, in=165] (s3top); 
\end{scope}

\begin{scope}[decoration={
    markings,
    mark=at position 0.4 with {\arrow{>}}}, 
    ] 
\draw [postaction={decorate}] (s2bottom) to [out=20, in=245] (s2top); 
\draw [postaction={decorate}] (s3bottom) to [out=35, in=165] (s3top); 
\end{scope}

\

\end{tikzpicture}
\hspace{-30pt}
 \def\svgwidth{150pt}
 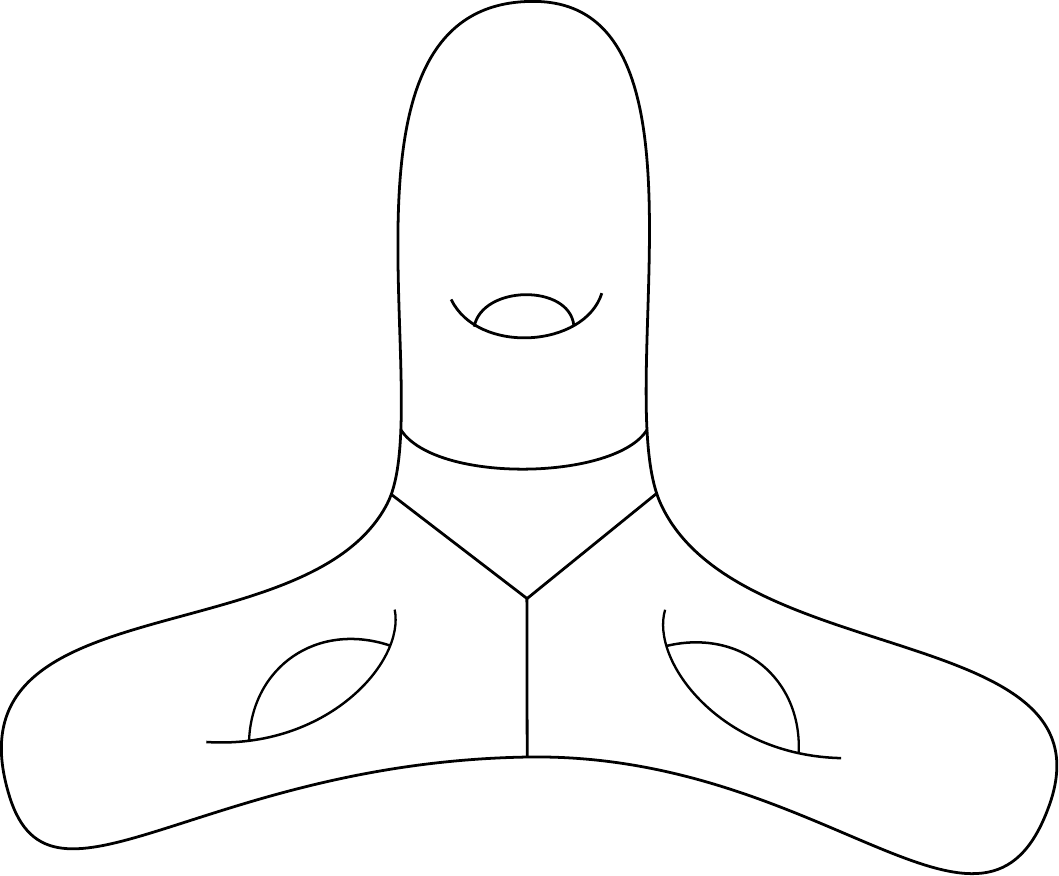
\vspace{-10pt}
\captionsetup{width=.9\linewidth}
 \caption{Three slitted unit squares glued to form a genus 3 translation surface.}
 \label{glued_up} 
 \end{figure}
 
 \

In Section \ref{sec:background} we define necessary terms and introduce notation. In Section \ref{sec:form_acc_pt} we will give conditions on  the irrational numbers to guarantee that the limit set in the HHS boundary of the corresponding Teichm\"uller geodesic ray is contained in a 2-simplex. 
Section \ref{sec:main_result} contains the proof of Theorem \ref{theorem:main}, rephrased there as Theorem \ref{theorem:main_rephrased}.  Section \ref{sec:main_result} shows how to carefully choose the entries of the continued fraction expansions of our irrational numbers to 
obtain fine control of the limit set. 

\

\noindent \textbf{Acknowledgments:} The author was supported by a Research Assistantship through NSF Grant number DMS-1510034.
The author is very grateful to her PhD advisor Chris Leininger for his guidance, support, and candid feedback.  
The author would also like to thank Matthew Durham and Kasra Rafi for helpful conversations.

\section{Background} \label{sec:background} 
In this section, we recall some needed definitions and theorems.  Throughout this paper, we let $S$ denote a connected, closed, orientable surface of genus at least $2$. 

\

\noindent \textbf{Notation: } Let $f, g \co Y \rightarrow \mathbb{R}$ be functions. If there exist constants $A \geq 1$ and $B \geq 0$ that depend only on the topology of $S$, such that for all $y \in Y$, we have 
 $\frac{1}{A}(g(y)-B) \leq f(y) \leq Ag(y)+B$, then we write $f \asymp g$. In the case that $B=0$ we write $f \stackrel{*}{\asymp} g$, and if $A=1$ we write $f \stackrel{+}{\asymp} g$. We define  $\prec, \stackrel{*}{\prec}$, and $\stackrel{+}{\prec}$ similarly. 

\

\subsection{Curve graph, combinatorial horoballs}
A \textit{curve} in $S$ is a homotopy class of an essential, simple, closed path in $S$. We say that two curves \textit{do not intersect} if there is a representative of each such that the representatives are disjoint.  Given a non-annular subsurface $Y$, we define the \textit{curve graph of $Y$}, denoted $\mathcal{C}(Y)$, to be the graph whose vertices are all the curves that have representatives essential in $Y$. Two curves are adjacent if and only if they do not intersect. In the case that $Y$ is a once-punctured torus or four-times punctured sphere, we modify the definition: two curves are adjacent if and only if they have representatives that have minimum intersection (i.e. intersect once or twice, respectively).

To every curve $\alpha$, we associate an annular subsurface $Y_\alpha$ by taking the visual compactification of an annular cover of $S$ to which $\alpha$ lifts. The \textit{curve graph of $Y_\alpha$}, which we denote both by $\C(Y_\alpha)$ and $\C(\alpha)$, is the graph whose vertices are  homotopy classes of embedded arcs in $Y_\alpha$ that connect one boundary component to the other, where the endpoints of the arc must be fixed throughout the homotopy. Two vertices are adjacent if and only if they have representative arcs whose restrictions to $\text{int}(Y_\alpha)$ are disjoint.

We write $Y \subseteq S$ to indicate that $Y$ is a subsurface of $S$, even though when $Y$ is an annulus, $Y$ is not a subset of $S$. 

Given a curve $\alpha$ in $S$, the \emph{combinatorial horoball associated to $\alpha$}, denoted $\Ho_\alpha$,  is the following graph. Begin with the graph Cartesian product $\C(\alpha) \times \mathbb{Z}_{\geq0}$ and then for each $n$ add edges so that each vertex $(x,n)$ is adjacent to every vertex in $\{(y,n):d_{\C(\alpha)}(x,y) \leq e^n\}.$

The spaces $\Ho_\alpha$ and $\C(Y)$ for each subsurface $Y$ are Gromov hyperbolic (see\cite{GM} and  \cite{MasurMinsky1}, respectively). We let $\partial \Ho_\alpha$ and $\partial \C(Y)$ denote their Gromov boundaries. 

\subsection{Extremal length and Teichm\"uller geodesics} 
Here we give some definitions and results from Teichm\"uller theory. We refer the reader to 
 \cite{Pap}, specifically chapter 2, for a careful treatment of the material. 
 
We define the \textit{Teichm\"uller space of $S$}, denoted $\T(S)$, to be the collection of equivalence classes of complex structures on $S$, where we define two complex structures to be equivalent if there is a map $S \rightarrow S$ isotopic to the identity which is biholomorphic when the domain is equipped with one of the complex structures and the range is equipped with the other. 

Consider $X \in \T(S)$. Throughout this paper, when it is convenient, we will also use $X$ to denote a structure in the equivalence class. 
Given an annulus $A$ in $S$, we let $\Mod_X(A)$ denote the modulus of $A$ in $X$. 
We define the \textit{extremal length in $X$ of a curve $\alpha$} in $S$ to be
\[\Ext_X(\alpha)= \inf \frac{1}{\Mod_X(A)},\]
where the infimum is taken over all annuli $A$ in $S$ with core curve $\alpha$. 

Every complex structure determines a collection of conformally equivalent Riemannian metrics on $S$, and in the collection there is a unique hyperbolic metric by the Uniformization Theorem. 
We let $\Hyp_X(\alpha)$ denote the length of the geodesic representative of $\alpha$ in the hyperbolic metric associated to $X \in \T(S)$. 
The following theorem gives a comparison of hyperbolic and extremal lengths, showing that when the hyperbolic length of a curve is small, its extremal length and hyperbolic length are (coarsely) equal. 
\begin{theorem}[Maskit \cite{Maskit}] \label{theorem:maskit} Given $X \in \T(S)$ and a curve $\alpha$ in $S$,
\[\frac{1}{\pi} \leq \frac{\Ext_X(\alpha)}{\Hyp_X(\alpha)} \leq \frac{1}{2} e^{\Hyp_X(\alpha)/2}. \]
\end{theorem}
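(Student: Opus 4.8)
The plan is to work directly with the geometric (reciprocal-of-modulus) definition supplied above, $\Ext_X(\alpha) = 1/\sup_A \Mod_X(A)$, where $A$ ranges over annuli in $X$ with core curve $\alpha$, and to sandwich $\Ext_X(\alpha)$ by comparing the largest and a conveniently large such annulus against the hyperbolic geometry of the geodesic representative of $\alpha$. Write $\ell = \Hyp_X(\alpha)$. The organizing object is the annular cover $\hat X_\alpha = \Hy/\langle g\rangle$, where $g$ is the hyperbolic isometry of translation length $\ell$ whose axis is a lift of the geodesic representative. Normalizing the axis to be the imaginary axis and passing to the coordinate $w = \log z$, this cover becomes the flat cylinder $\{0 < \operatorname{Im} w < \pi\}/(w \sim w + \ell)$, whose conformal modulus is $\pi/\ell$. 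Everything else is read off from subcylinders of this model.

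For the left-hand inequality $\Hyp_X(\alpha)/\pi \leq \Ext_X(\alpha)$, I would observe that any annulus $A$ with core $\alpha$ has $\pi_1(A) \to \pi_1(S)$ with image $\langle\alpha\rangle$, so the inclusion lifts to a core-preserving conformal embedding $A \hookrightarrow \hat X_\alpha$. By the Gr\"otzsch-type monotonicity of modulus under inclusion of essential subannuli, $\Mod_X(A) \leq \Mod(\hat X_\alpha) = \pi/\ell$; taking the supremum over $A$ gives $\sup_A \Mod_X(A) \leq \pi/\ell$, hence $\Ext_X(\alpha) \geq \ell/\pi$. This direction is clean and uses only the modulus of the model cylinder and the standard monotonicity property.

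For the right-hand inequality I would instead exhibit a single large embedded annulus, namely the standard collar. By the collar lemma, the set of points within hyperbolic distance $\rho$ of the geodesic is embedded whenever $\sinh\rho \leq 1/\sinh(\ell/2)$. Computing the hyperbolic distance from the axis to a ray $\arg z = \theta$ shows that this collar is the subcylinder $\{\theta_0 < \operatorname{Im} w < \pi-\theta_0\}/(w\sim w+\ell)$ with $\cot\theta_0 = \sinh\rho$, so its modulus is $(\pi-2\theta_0)/\ell$. At the maximal admissible width $\sinh\rho = 1/\sinh(\ell/2)$ this equals $\tfrac{2}{\ell}\arctan\!\big(1/\sinh(\ell/2)\big)$, giving an embedded annulus whose modulus forces $\Ext_X(\alpha) \leq \ell/\big(2\arctan(1/\sinh(\ell/2))\big)$. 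It then remains to verify the elementary inequality $\arctan\!\big(1/\sinh(\ell/2)\big) \geq e^{-\ell/2}$ for all $\ell>0$, which upgrades this to the claimed bound $\Ext_X(\alpha)\leq \tfrac12\,\ell\,e^{\ell/2}$.

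The hyperbolic-trigonometry bookkeeping in the collar step is where I expect the genuine work to concentrate: deriving the distance from the axis to the equidistant rays, translating the collar-lemma width into the cutoff $\theta_0$, and confirming the transcendental inequality $\arctan(1/\sinh(\ell/2)) \geq e^{-\ell/2}$ uniformly in $\ell$. The large-$\ell$ asymptotics leave roughly a factor of two of slack, and as $\ell\to 0$ the left side tends to $\pi/2 > 1$, so the estimate is comfortable but must be checked throughout its range. By contrast, identifying the conformal type of $\hat X_\alpha$ and invoking monotonicity of modulus are routine once the $w=\log z$ normalization is fixed, so the left-hand bound should fall out immediately.
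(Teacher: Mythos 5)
The paper contains no proof of this statement: it is quoted directly from Maskit \cite{Maskit}, so there is no internal argument to compare yours against. Your proof is correct, and it is essentially the classical (Maskit-style) argument: the lower bound via lifting any competing annulus conformally into the annular cover $\Hy/\langle g\rangle \cong \{0<\operatorname{Im}w<\pi\}/(w\sim w+\ell)$, whose modulus is $\pi/\ell$, and applying monotonicity of modulus; the upper bound by exhibiting the embedded collar as a single annulus of definite modulus. The one step you leave open does close, and elementarily: with $x=\ell/2$ and $u=e^{-x}\in(0,1)$ one has $1/\sinh x = 2e^{-x}/(1-e^{-2x}) \geq 2u$, so it suffices to check $\arctan(2u)\geq u$ on $[0,1]$; the function $f(u)=\arctan(2u)-u$ vanishes at $0$, increases on $[0,1/2]$, decreases on $[1/2,1]$, and satisfies $f(1)=\arctan 2 - 1>0$, hence $f\geq 0$ throughout. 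Therefore $\arctan\bigl(1/\sinh(\ell/2)\bigr)\geq e^{-\ell/2}$, and your collar bound $\Ext_X(\alpha)\leq \ell/\bigl(2\arctan(1/\sinh(\ell/2))\bigr)$ does upgrade to the claimed $\Ext_X(\alpha)\leq \frac{\ell}{2}e^{\ell/2}$. Your identification of the collar as the subcylinder $\{\theta_0<\operatorname{Im}w<\pi-\theta_0\}$ with $\cot\theta_0=\sinh\rho$ is also correct ($\sinh d(z,\mathrm{axis})=|\cot\arg z|$), so the whole argument stands.
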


Let $q$ be a (holomorphic) quadratic differential with respect to $X$, and consider the induced singular flat structure on $S$.  We let $\ell_q(\gamma)$ denote the $q$-length of a geodesic representative of a curve $\gamma$ in the metric induced by $q$.
The collection of $q$-geodesic representatives of a curve $\alpha$  form a  (possibly degenerate) Euclidean cylinder, which we will call $F$. An \emph{expanding annulus with core $\alpha$} is the largest one-sided regular neighborhood of a boundary component of $F$ in a direction away from $F$ that is an embedded annulus. Let $E$ and $G$ denote the two expanding annuli with core $\alpha$. The following theorem of Minsky relates  $\Ext_X(\alpha)$ with the moduli of $F, E, $ and $G$, and the subsequent theorem gives a way to estimate the modulus of an annulus  that satisfies certain properties. 
\begin{theorem}[Minsky \cite{Minsky3}, Theorems 4.5 and 4.6] \label{theorem:Ext_sum_mod}  There exists $\epsilon_0$ depending only on $S$ such that if  $\Ext_X(\alpha) \leq \epsilon_0$, then 
\[\frac{1}{\Ext_X(\alpha)} \stackrel{*}{\asymp} \Mod_X(E)+\Mod_X(F)+\Mod_X(G).\]
\end{theorem}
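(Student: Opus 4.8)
The plan is to pass to the reciprocal formulation $\frac{1}{\Ext_X(\alpha)} = \sup_A \Mod_X(A)$, where the supremum runs over all embedded annuli $A$ in $X$ with core curve $\alpha$, and then to bound this supremum from both sides using the singular flat geometry of $q$. The multiplicative comparison $\stackrel{*}{\asymp}$ will come from an essentially sharp lower bound together with a matching upper bound whose constant depends only on the topology of $S$.

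For the lower bound I would exhibit one annulus of large modulus. The flat cylinder $F$ and the two expanding annuli $E$ and $G$ all have core $\alpha$ and meet only along the two boundary components of $F$, so their union $E \cup F \cup G$ is an embedded annulus with core $\alpha$ that is subdivided into three sub-annuli by curves parallel to the core. The Gr\"otzsch superadditivity inequality for moduli of such nested annuli then gives $\Mod_X(E \cup F \cup G) \geq \Mod_X(E)+\Mod_X(F)+\Mod_X(G)$, and hence $\frac{1}{\Ext_X(\alpha)} \geq \Mod_X(E)+\Mod_X(F)+\Mod_X(G)$. This direction needs no smallness hypothesis.

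The upper bound is the substantive direction. Here I would bound $\Mod_X(A)$ for an \emph{arbitrary} annulus $A$ with core $\alpha$ by a uniform multiple of the sum. The length--area inequality $\Mod_X(A) \leq \text{Area}_\rho(A)/\ell_\rho(\text{core})^2$, valid for any conformal metric $\rho$, reduces the problem to showing that the flat metric prevents $A$ from accumulating modulus outside $E \cup F \cup G$. I would foliate $A$ by core curves and split its modulus into the contribution inside the flat cylinder, which is exactly $\Mod_X(F) = h_F/\ell_q(\alpha)$ with $h_F$ the height of $F$, and the contribution outside $F$. On each side, once a core curve leaves the flat cylinder it must expand, and since $E$ and $G$ are by definition the \emph{maximal} embedded one-sided neighborhoods, any competitor reaching farther meets a singularity; this forces the core length to grow at least linearly with the flat distance traveled, so the modulus gained there is only logarithmic in that distance. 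Establishing this logarithmic bound, comparable to $\Mod_X(E)$ and $\Mod_X(G)$, is the expanding-annulus estimate and would be my key lemma.

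The main obstacle is precisely the control of these expanding regions: one must verify that a competitor annulus cannot gain extra modulus by straying from the flat-geodesic foliation or by threading between nearby singularities, and that crossing the boundary of $F$ contributes no uncontrolled additive error. This is exactly where the hypothesis $\Ext_X(\alpha) \leq \epsilon_0$ is used. Smallness of the extremal length forces $\alpha$ into the thin part, so by Theorem \ref{theorem:maskit} and Theorem \ref{theorem:Ext_sum_mod}'s own geometry the cylinder $F$ together with $E$ and $G$ are genuinely large and well separated from the remaining singular flat structure; this separation is what lets the local flat estimates assemble into the stated comparison with constants depending only on $S$.
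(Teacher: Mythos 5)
First, a caveat: the paper does not prove this statement at all; it is quoted as background from Minsky \cite{Minsky3}, so your attempt can only be measured against the standard argument in the literature. Your skeleton does match that argument (superadditivity of moduli for one direction, flat length--area estimates for the other), but two steps are genuinely broken. The first is your claim that $E \cup F \cup G$ is an embedded annulus. The definition of the expanding annuli only makes $E$ and $G$ disjoint from the interior of $F$; nothing prevents $E$ and $G$ from overlapping \emph{each other} (for instance when $\alpha$ is nonseparating, so that $S$ cut along $F$ is connected and both wavefronts expand into the same region, each stopping only when it meets a singularity or itself, not when it meets the other). The conclusion of your easy direction survives, because superadditivity over disjoint essential annuli applies to the pairs $\{E,F\}$ and $\{F,G\}$ separately, giving $1/\Ext_X(\alpha) \geq \Mod_X(F) + \max\{\Mod_X(E),\Mod_X(G)\} \geq \frac{1}{2}\left(\Mod_X(E)+\Mod_X(F)+\Mod_X(G)\right)$, but the step as you wrote it is false, and the correct fix costs a factor of $2$.

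The second problem is that the substantive direction is not actually proved. The ``expanding-annulus estimate'' you defer is essentially Theorem \ref{theorem:modulus} (Rafi's form of Minsky's estimate), so deferring it concedes the heart of the theorem; moreover, your foliation argument only controls the part of a competitor annulus $A$ lying inside $E \cup F \cup G$, whereas $A$ can continue past the first singularity, and bounding the modulus it can accumulate out there by a constant depending only on $S$ is precisely the hard part of Minsky's proof. Worse, your explanation of where the hypothesis $\Ext_X(\alpha) \leq \epsilon_0$ enters is circular: you invoke ``Theorem \ref{theorem:Ext_sum_mod}'s own geometry'' to conclude that $E$, $F$, $G$ are large and well separated, but that is (a consequence of) the statement being proved. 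The actual role of $\epsilon_0$ is more prosaic: the flat-geometry estimates naturally yield an inequality with an additive error, $1/\Ext_X(\alpha) \leq \Mod_X(E)+\Mod_X(F)+\Mod_X(G)+C$ with $C = C(S)$, and the smallness hypothesis guarantees $1/\Ext_X(\alpha) \geq 2C$, which is exactly what allows the additive constant to be absorbed into the purely multiplicative comparison $\stackrel{*}{\asymp}$ asserted in the statement.
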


\begin{theorem}[Rafi \cite{Rafi_short} Lemma 3.6] \label{theorem:modulus} Let $q$ be a quadratic differential with respect to $X \in \T(S)$. Let $A$ be an annulus in $S$ such that with respect to the $q$-metric, $A$ has equidistant boundary components and exactly one boundary component $\gamma_0$ a geodesic. Further suppose the interior of $A$ does not contain any singularities of $q$. 
Then 
\[\emph{Mod}_X(A) \asymp \log \left(\frac{d}{\ell_q(\gamma_0)}\right),\]
where $d$ is the $q$-distance between the boundary components of $A$.
\end{theorem}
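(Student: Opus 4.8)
The plan is to reduce the modulus to a one-dimensional integral in the direction transverse to $\gamma_0$ and then to estimate that integral using the flat geometry of the equidistant foliation. First I would introduce normal coordinates on $A$: since the boundary components are equidistant and $\text{int}(A)$ contains no singularities of $q$, the $q$-geodesics leaving $\gamma_0$ orthogonally foliate $A$ and meet each equidistant curve at a right angle. For $t \in [0,d]$ let $\gamma_t$ denote the curve at $q$-distance $t$ from $\gamma_0$ and set $\ell(t) = \ell_q(\gamma_t)$, so that $\ell(0) = \ell_q(\gamma_0)$ and, by the coarea formula, the $q$-area element is $\ell(t)\,dt$.

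Next I would squeeze the modulus between the two natural extremal-length problems. Taking the conformal metric $\rho = 1/\ell(t)$ and testing it against the family of arcs joining the two boundary components gives the lower bound $\Mod_X(A) \geq \int_0^d dt/\ell(t)$, since any such arc has $\rho$-length at least $\int_0^d dt/\ell(t)$ while $\int_A \rho^2\,dA = \int_0^d dt/\ell(t)$. Testing the \emph{same} metric against the family of core-homotopic curves gives the matching reverse bound $\Mod_X(A) \stackrel{*}{\prec} \int_0^d dt/\ell(t)$; here the equidistant hypothesis and the absence of interior singularities are precisely what force the coordinates $(s,t)$ to be close enough to a product that each core curve has $\rho$-length bounded below by a universal constant. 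Thus $\Mod_X(A) \stackrel{*}{\asymp} \int_0^d dt/\ell(t)$, and it remains to estimate this integral.

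The geometric heart is the length estimate $\ell(t) \asymp \ell_q(\gamma_0) + t$. Because $\gamma_0$ is a $q$-geodesic it is a concatenation of straight saddle-connection segments meeting at cone points; offsetting outward into $A$ by a distance $t$ preserves the length of each straight segment and replaces each corner by a circular arc of radius $t$ whose angle equals the cone-angle excess of $\gamma_0$ on the $A$-side at that cone point. Hence $\ell(t) = \ell_q(\gamma_0) + \kappa\, t$ exactly, where $\kappa$ is the total such excess. The hypothesis that \emph{exactly one} boundary of $A$ is a geodesic forces $\kappa > 0$: if $\kappa$ were $0$ then every equidistant curve, including the far boundary, would itself be a geodesic. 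For the uniform comparison I would argue that $\kappa$ is bounded above and below by constants depending only on the topology of $S$: the upper bound is Gauss--Bonnet, since the total cone-angle excess of $q$ is $-2\pi\chi(S)$; the lower bound uses that $\gamma_0$ is a geodesic with angle at least $\pi$ on the side away from $A$, so that at any cone vertex where $\gamma_0$ turns into $A$ the $A$-side angle is at least $2\pi$ and the excess there is at least $\pi$. Consequently $\ell(t) \asymp \ell_q(\gamma_0) + t$, and
\[
\Mod_X(A) \;\stackrel{*}{\asymp}\; \int_0^d \frac{dt}{\ell_q(\gamma_0)+t} \;=\; \log\!\left(\frac{\ell_q(\gamma_0)+d}{\ell_q(\gamma_0)}\right) \;\asymp\; \log\!\left(\frac{d}{\ell_q(\gamma_0)}\right),
\]
as claimed.

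I expect the main obstacle to lie in the uniform two-sided control of the turning $\kappa$, i.e.\ pinning down that the total cone-angle excess of $\gamma_0$ on the $A$-side is bounded away from both $0$ and $\infty$ purely in terms of $\chi(S)$; the excess at an individual cone vertex is an unconstrained real number in general, so the positive lower bound genuinely relies on $\gamma_0$ being a geodesic whose complementary side is flat along its length (as is automatic when $\gamma_0$ is the geodesic boundary of a maximal flat cylinder, the setting in which the estimate is applied). A secondary technical point is the reverse extremal-length inequality $\Mod_X(A) \stackrel{*}{\prec} \int_0^d dt/\ell(t)$, where controlling the core-curve extremal length up to a topological multiplicative constant is exactly where the equidistant and no-singularity hypotheses must be used.
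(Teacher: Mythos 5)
This theorem is not proved in the paper at all: it is quoted from Rafi's work (Lemma 3.6 of the cited paper), so there is no internal proof to compare against and your argument must stand on its own. Its overall architecture is the standard one and most of it is sound: the foliation of $A$ by equidistant curves $\gamma_t$, the comparison $\Mod_X(A) \stackrel{*}{\asymp} \int_0^d dt/\ell(t)$ (your reverse inequality is only sketched, but it can be made rigorous: the normal projection of $A$ onto $\gamma_0$ is $1$-Lipschitz because every $A$-side angle of $\gamma_0$ is at least $\pi$, so a core curve reaching heights in $[t_{\min},t_{\max}]$ has Euclidean length at least $\max\{\ell(t_{\min}),\,2(t_{\max}-t_{\min})\}$, which together with the Gauss--Bonnet upper bound on $\kappa$ gives a uniform lower bound on its $\rho$-length), and the exact formula $\ell(t)=\ell_q(\gamma_0)+\kappa t$ with $\kappa\geq 0$.

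The genuine gap is the uniform lower bound on $\kappa$, and the justification you give in the body of the proof is false. At a cone point of angle $\Theta\geq 3\pi$, the geodesic condition constrains \emph{both} side angles to be $\geq\pi$; it does not force a vertex at which $\gamma_0$ turns into $A$ to have $A$-side angle $\geq 2\pi$. The splitting $(\pi+\epsilon,\,\Theta-\pi-\epsilon)$ is perfectly consistent with $\gamma_0$ being a geodesic, so the excess at a single vertex can be an arbitrarily small positive number, exactly as you concede in your closing paragraph. Your fallback --- restricting to the case where the complementary side of $\gamma_0$ is flat, as for the boundary of a maximal flat cylinder --- does not prove the theorem as stated, and it does not even cover the application made in this paper: for the curves $\beta_i$ the flat annulus is \emph{degenerate}, $\gamma_0$ is the union of the two sides of a slit, and the complementary angle at each corner is $4\pi$, not $\pi$ (the $A$-side angles there happen to be $2\pi$ for a different reason). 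The missing idea is a quantization statement about the \emph{total} turning rather than a per-vertex bound: for a holomorphic quadratic differential every cone angle is an integer multiple of $\pi$ (equivalently, the linear holonomy is $\pm\,\mathrm{Id}$), and hence, developing along the closed geodesic $\gamma_0$ (or applying Gauss--Bonnet to the subsurface on the side of $\gamma_0$ opposite $A$), the total turning $\kappa=\sum_j(\theta_{A,j}-\pi)$ lies in $\pi\mathbb{Z}_{\geq 0}$. Since $\kappa=0$ would make every equidistant curve, including the second boundary component, a geodesic --- contradicting the hypothesis that exactly one boundary component is geodesic --- this forces $\kappa\geq\pi$. This step is not optional: by your own computation $\Mod_X(A)\stackrel{*}{\asymp}\frac{1}{\kappa}\log\bigl(1+\kappa d/\ell_q(\gamma_0)\bigr)$, and if $\kappa$ could be as small as $\ell_q(\gamma_0)/d$ this would be comparable to $d/\ell_q(\gamma_0)$, exponentially larger than $\log\bigl(d/\ell_q(\gamma_0)\bigr)$. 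With $\kappa\in[\pi,\,-2\pi\chi(S)]$ in hand, your final integral estimate is correct and completes the proof.
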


Let $q$ a quadratic differential with respect to $X$. The pair $(X,q)$ determines a  geodesic in the Teichm\"uller metric.   Composing the natural coordinates of  $q$  away from its singularities   with $\left( {\begin{array}{cc}
   e^t & 0 \\       0 & e^{-t} \\      \end{array} } \right)$ yields a new complex structure $X_t \in \T(S)$ on $S$ and a new quadratic differential $q_t$ with respect to $X_t$. The map $\G:(-\infty, \infty) \rightarrow \T(S)$ given by $t \mapsto X_t$ is a  geodesic.  All geodesics in $\T(S)$ can be described in this way. 
  
 We will let $\Ext_t$, $\Mod_t$, and $\Hyp_t$ denote  $\Ext_{X_t}$,$\Mod_{X_t}$, and $\Hyp_t$,  respectively.
Let $\alpha$ be a curve in $S$ such that no representative of $\alpha$ is a leaf of the vertical or horizontal foliation of $S$ corresponding to $q$. Then we define the \emph{balance time of $\alpha$} to be the time $t$ such that the horizontal $q_t$-length and vertical $q_t$-length of $\alpha$ are equal. 
We define the  \textit{geodesic ray determined by $(X,q)$} to be $\G$ restricted to $[0,\infty)$. 

\subsection{HHS structure of Teichm\"uller space}

Given  $X \in \T(S)$, we define a \emph{short marking} $\mu_X$ on $X$ to be a maximal collection of disjoint curves with associated transversals selected as follows. Choose a curve $\alpha_1$ with shortest extremal length in $X$, then of those curves that do not intersect $\alpha_1$, choose one with shortest extremal length. Continue until a maximal collection of non-intersecting curves, which we denote by $\base(\mu_X)$, is obtained. Additionally, to each curve $\alpha \in \base(\mu_X)$ we associate a transverse curve $\tau_\alpha$ by selecting from those curves that intersect $\alpha$ but no other curves in $\text{base}(\mu_X)$ a curve with shortest length. 

In \cite{Minsky2}, Minsky proved that for curves $\alpha$ and $\beta$, 
\begin{equation} \label{eq:intersection2} 
i(\alpha, \beta)^2 \leq \Ext_X(\alpha)\Ext_X(\beta),
\end{equation} 
where $i( \alpha, \beta)$ is the  geometric intersection number of $\alpha$ and $\beta$. 
So if $\Ext_X(\alpha)$ is sufficiently small, then every curve $\beta$ intersecting $\alpha$ must satisfy $\Ext_X(\beta) > \Ext_X(\alpha)$. Consequently, we have the following theorem.

\begin{theorem} \label{theorem:collar_lemma_Ext}
There exists a constant $\epsilon_0$ such that  for all $X \in \T(S)$, if a curve $\alpha$ satisfies $\Ext_X(\alpha) \leq \epsilon_0$, then $\alpha$ is in the base of every short marking on $X$.
\end{theorem}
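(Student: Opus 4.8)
The plan is to show that once $\Ext_X(\alpha)$ is below a universal threshold $\epsilon_0$, the greedy selection process defining a short marking is forced to pick $\alpha$ into the base. The key mechanism is inequality \eqref{eq:intersection2}: if $\Ext_X(\alpha)$ is tiny, then any curve $\beta$ meeting $\alpha$ has $\Ext_X(\beta) \geq i(\alpha,\beta)^2/\Ext_X(\alpha) \geq 1/\Ext_X(\alpha)$, which is large. So $\alpha$ is shorter (in extremal length) than everything it intersects. I will use this to run the greedy algorithm and argue $\alpha$ survives.

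First I would fix the constant. There are only finitely many topological types of curves up to the mapping class group, but more to the point, I want $\epsilon_0$ small enough that $1/\epsilon_0$ exceeds $\Ext_X(\alpha)$ whenever $\Ext_X(\alpha)\le\epsilon_0$; taking $\epsilon_0 < 1$ already gives $1/\Ext_X(\alpha) > 1 > \Ext_X(\alpha)$, so every curve $\beta$ crossing $\alpha$ strictly exceeds $\alpha$ in extremal length. I should also note there can be at most one short curve in this regime on any given pants-type neighborhood, but the cleaner route is purely via the greedy process. Then I would trace through the construction of $\mu_X$: the algorithm repeatedly selects, among curves disjoint from those already chosen, one of minimal extremal length.

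The main argument is an exchange/induction on the greedy steps. Suppose for contradiction $\alpha \notin \base(\mu_X)$. Since $\base(\mu_X)$ is a maximal disjoint collection (a pants decomposition), $\alpha$ is not disjoint from all of it, so $\alpha$ must intersect some chosen curve $\beta \in \base(\mu_X)$. Consider the first step at which such a crossing curve $\beta$ is selected; at that step $\alpha$ was still available (it is disjoint from all previously chosen curves, since those it crossed would have been caught earlier). By \eqref{eq:intersection2} and the threshold, $\Ext_X(\alpha) < \Ext_X(\beta)$, so the greedy rule would have preferred $\alpha$ over $\beta$ — contradicting the choice of $\beta$. Hence $\alpha$ is disjoint from every base curve, and by maximality $\alpha$ itself lies in the base.

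The step I expect to require the most care is the bookkeeping in that exchange argument: making precise the claim that ``$\alpha$ is still available at the step where the first crossing curve is chosen.'' I must verify that any base curve $\alpha$ intersects is itself selected only after all curves disjoint from the current collection (including $\alpha$) have been compared, and handle the tie-breaking and the special once-punctured-torus/four-punctured-sphere adjacency conventions so that ``disjoint'' and ``minimal extremal length'' are used consistently. Once that ordering is pinned down, the contradiction is immediate from \eqref{eq:intersection2}.
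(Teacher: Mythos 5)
Your proposal is correct and takes essentially the same approach as the paper: the paper deduces this theorem directly from Minsky's inequality (\ref{eq:intersection2}), observing that once $\epsilon_0 < 1$ any curve $\beta$ intersecting $\alpha$ satisfies $\Ext_X(\beta) \geq 1/\Ext_X(\alpha) > \Ext_X(\alpha)$, and leaves the greedy exchange argument implicit. Your write-up just makes that implicit step (the ``first crossing curve chosen'' contradiction plus maximality of the base) explicit, and it is sound.
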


Here and throughout the remainder of the paper, $\epsilon_0$ will denote the minimum of the constants in Theorems \ref{theorem:Ext_sum_mod},   \ref{theorem:collar_lemma_Ext}, and \ref{theorem:twisting_teich_geo}.

Given a subsurface $Y\subseteq S$, we let $\pi_Y(\mu_X)$ denote the usual subsurface projection of $\mu_X$ to $\C(Y)$ (see \cite{MMII}). Additionally, we define a projection map $\pi_Y \co \T(S) \rightarrow 2^{\C(Y)}$ by $X \mapsto \pi_Y(\mu_X)$, where $\mu_X$ is a choice of short marking on $X$. For a curve $\alpha$, we often write $\pi_\alpha$ instead of $\pi_{Y_\alpha}$.

Additionally, for each  curve $\alpha$, we define a map $\pi_{\Ho_\alpha} \co \T(S) \rightarrow 2^{\Ho_\alpha}$ as follows. Let $X \in \T(S)$. 
 If $\Ext_X(\alpha) > \epsilon_0$, define $n(X)=0$. Otherwise, define $n=n(X) \in \mathbb{Z}_{\geq 0}$ so that $\frac{\epsilon_0}{e^{n+1}} <\Ext_X(\alpha) \leq \frac{\epsilon_0}{e^{n}}$. We then define 
\[\pi_{\Ho_\alpha}(X)=\{(\tau,n(X)): \tau \in \pi_\alpha(\mu_X)\}.\]

For every subsurface $Y$, define $d_Y(\cdot ,\cdot )=\text{diam}_{\C(Y)}\pi_Y(\cdot ) \cup \pi_Y(\cdot)$ and similarly, for a curve $\alpha$ define $d_{\Ho_\alpha}(\cdot ,\cdot )=\text{diam}_{\Ho_\alpha}\pi_{\Ho_\alpha}(\cdot ) \cup \pi_{\Ho_\alpha}(\cdot)$.

The results in \cite{Durham}, \cite{EMR},\cite{MMII}, \cite{Rafi_comb} imply that taking the collection of subsurfaces of $S$ as an index set and 
\[\{\pi_Y\co\T(S) \rightarrow 2^{\C(Y)}:Y \text{ non-annular} \} \cup \{\pi_{\Ho_\alpha}\co\T(S) \rightarrow 2^{\Ho_\alpha} : \alpha \text{ a curve in } S \}\] as the collection of associated projection maps yields a hierarchically hyperbolic space (HHS)  structure on $\T(S)$. Throughout this paper, we will regard $\T(S)$ as an HHS space equipped with this structure. We will only need certain properties of this structure, described in the next subsection. See \cite{BHS} for the definition of an HHS structure.

\subsection{HHS boundary of Teichm\"uller space}
According to \cite{DHS}, the HHS structure on $\T(S)$ determines an associated boundary called the \emph{HHS boundary of $\T(S)$}, which we will denote $\partial \T(S)$. As a set, $\partial \T(S)$ is define to be 
\begin{multline}
\hspace{-10pt} \partial \T(S) =\Bigg\{\sum_{Y \subseteq S} c_Y\lambda_Y : \lambda_Y \in \partial \mathcal{C}(Y) \text{ for } Y \text{ non-annular, } \lambda_{Y_\alpha} \in \partial \Ho_\alpha \text{ for } Y_\alpha \text{ annular,} \Bigg. \\  \Bigg.  c_Y \geq 0, \text{ and }\sum_{Y \subseteq S} c_Y =1,  \text{ and if } c_{Y'}, c_{Y}>0, \text{ then } Y\text{ and } Y' \text{ are disjoint or equal} \Bigg\}. \nonumber
\end{multline}

Section 2 of \cite{DHS} describes a Hausdorff topology on $\T(S) \cup \partial \T(S)$ in which a sequence $(X_n)_{n \in \mathbb{N}}$ in $\T(S)$ converges to a point $\displaystyle{\sum \limits_{Y \subseteq S}  c_Y \lambda_Y}$ in $\partial \T(S)$, if and only if the following statements hold: 
Let $Y_1, \ldots, Y_k$ be the collection of subsurfaces with $c_Y >0$. 
\begin{enumerate}
\item For all $i=1,\ldots k$, if $Y_i$ is non-annular, then  $\displaystyle{\lim_{n \rightarrow \infty} \pi_{Y_i}(X_n) = \lambda_{Y_i}}$, and if $Y_i$ is annular with core curve $\alpha$, then $\lim \limits_{n\rightarrow \infty} \pi_{\Ho_{\alpha}}(X_n)=\lambda_{Y_i}$.
\item For all  subsurfaces $Y$, let $\ol{d_Y}$ denote $d_Y$ if $Y$ is non-annular and $d_{\Ho_\alpha}$ if $Y$ is annular with core curve $\alpha$.  Then for all subsurfaces $Y$ we have $\displaystyle{\lim_{n \rightarrow \infty} \frac{\ol{d_{Y}}(X_0, X_n)}{\sum \limits_{i=1}^k \ol{d_{Y_i}}(X_0, X_n)}=c_Y}.$
\end{enumerate}

\subsection{Continued fractions for irrational numbers}
Here we recall some elementary facts on continued fractions (see for example \cite{RS}). 
Let $\theta$ be an irrational number with continued fraction expansion $[a_0; a_1, a_2, a_3,\ldots ]$. That is, 
\begin{equation*}
\theta=a_0+\cfrac{1}{a_1+\cfrac{1}{a_2+\cfrac{1}{a_3+\cdots}}}
\end{equation*}
We will always assume $a_0 \geq 0$ and all other $a_n$ are strictly positive. 
We define the $n^{th}$ convergent of $\theta$ to be the reduced fraction $\frac{p_n}{q_n}=[a_0;a_1,a_2, \ldots, a_n]$. The numbers $p_n$ and $q_n$ are given recursively by

\begin{equation} \label{eq:recurrence}
q_n=a_n q_{n-1}+q_{n-2},  \hspace{10pt} q_{-1} =0, \text{ and } \hspace{10pt} q_{-2}=1 
\end{equation} 
and
\[p_n=a_n p_{n-1}+p_{n-2},  \hspace{10pt} p_{-1} =1, \text{ and } \hspace{10pt} p_{-2}=0 \]
 and satisfy
\begin{equation} \label{eq:denom_growth}
\frac{1}{q_n+q_{n+1}} \leq |p_n-\theta q_n| \leq \frac{1}{q_{n+1}} 
\end{equation} 
and
\begin{equation} \label{eq:det1}
|p_nq_{n+1}-q_np_{n+1}|=1.
\end{equation}
A simple but useful observation is that $\theta$ and each $p_n/q_n$ can be bounded as follows:
\begin{equation} \label{eq:Ext2}
 a_0 \leq \theta \leq a_0+1 \hspace{10pt} \text{ and } \hspace{10pt} a_0 \leq p_n/q_n \leq a_0+1.
\end{equation} 

\subsection{Teichm\"uller geodesic rays from irrational numbers}
Let $\theta_0, \theta_1, \theta_2$ be irrational numbers and $0<s<1$ and consider the corresponding Teichm\"uller geodesic ray $\G:[0, \infty) \rightarrow \T(S)$, described in Section \ref{sec:intro}, which we have parameterized by arc length.  We let $X_t$ denote $\mathcal{G}(t)$. 
For each $i$, let $Y_i$  denote the subsurface of $S$ that is the image of the slitted square $R_i$ under the gluing map, and let $\beta_i$ denote the boundary curve of $Y_i$.
Let  $p^i_n/q^i_n$ denote the $n^{th}$ convergent of $\theta_i$. Let $\alpha_i(n)$ denote the curve in $S$ corresponding to the trajectory in $R_i$ with slope $p^i_n/q^i_n$.
Define $T^i_n$ to be the balance time along $\mathcal{G}$ of $\alpha_i(n)$. We shall use these notations throughout the paper.    When we use them, it will be clear from context which irrational numbers and  Teichm\"uller geodesic ray we are working with. 

In \cite{Lenzhen}  Lenzhen gives an explicit formula for  $T^i_n$ and gives a useful  bound for the extremal length of $\alpha_i(n)$ along $\G$. 

\begin{theorem}[Lenzhen \cite{Lenzhen}, Lemma 1, proof of Lemma 3]  \label{theorem:lenzhen} For all $n \geq 0$  and $i=0,1,2$ 
\begin{enumerate}
\item $\Ext_t(\alpha_i(n)) \leq \left( \frac{\sqrt{1+\theta_i^2}}{\sqrt{1+\theta_i^2}-s|q^i_n\theta_i-p^i_n|} \right) \ell_{q_t}^2(\alpha_i(n))$ for $t \geq 0$.
\item The quadratic differential $q_t$ induces a flat structure on the torus $Y'_i$ obtained by ignoring the slit in $Y_i$.  In that metric, for all $t \in [T^i_n, T^i_{n+1}]$, a shortest curve in $Y_i'$ is $\alpha_i(n)$ or $\alpha_i(n+1)$. This statement also holds for the slitted torus $Y_i$ using the $q_t$-metric. Moreover, the length of $\alpha_i(n)$ in the metric $q_t$ induces on $Y_i'$ is equal to $\ell_{q_t}(\alpha_i(n))$. 
\item $T^i_n=\frac{1}{2} \log \frac{p^i_n\theta_i+q^i_n}{|q^i_n\theta_i-p^i_n|}$.
\end{enumerate}
\end{theorem}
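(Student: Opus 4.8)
The plan is to fix flat coordinates for $q$ in which the vertical foliation is the slit direction, to express each $\alpha_i(n)$ as a primitive integer vector, and then to read off all three statements from this picture together with the continued-fraction estimates (\ref{eq:denom_growth})--(\ref{eq:det1}). Concretely, on the torus $Y_i'$ obtained by filling in the slit, the structure $q_0$ presents $Y_i'$ as $\mathbb{C}/\Lambda$ with $\Lambda$ the lattice spanned by the sides of the unit square $R_i$; I normalize so that the vertical direction of $q$ is the slit direction, the unit vector $u=(1,\theta_i)/\sqrt{1+\theta_i^2}$. The curve $\alpha_i(n)$ is carried by the primitive vector $(q^i_n,p^i_n)$, and projecting onto $u$ and $u^\perp$ gives its vertical and horizontal $q_0$-lengths $\tfrac{q^i_n+p^i_n\theta_i}{\sqrt{1+\theta_i^2}}$ and $\tfrac{|q^i_n\theta_i-p^i_n|}{\sqrt{1+\theta_i^2}}$. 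Since the flow acts by $\mathrm{diag}(e^t,e^{-t})$ in these coordinates, on $Y_i'$ the horizontal and vertical $q_t$-lengths of $\alpha_i(n)$ are $e^t\tfrac{|q^i_n\theta_i-p^i_n|}{\sqrt{1+\theta_i^2}}$ and $e^{-t}\tfrac{q^i_n+p^i_n\theta_i}{\sqrt{1+\theta_i^2}}$. Statement (3) is then immediate: the balance time is where these are equal, and solving for $t$ makes the $\sqrt{1+\theta_i^2}$ cancel, giving $T^i_n=\tfrac12\log\frac{p^i_n\theta_i+q^i_n}{|q^i_n\theta_i-p^i_n|}$.

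For (1), I bound $\Ext_t(\alpha_i(n))$ from above by exhibiting a single embedded annulus of large modulus and using $\Ext_X(\alpha)\le 1/\Mod_X(A)$. On $Y_i'$ the geodesics parallel to $\alpha_i(n)$ fill the torus as one Euclidean cylinder of circumference $\ell_{q_t}(\alpha_i(n))$ and area $1$, of modulus $1/\ell_{q_t}^2(\alpha_i(n))$. Cutting the slit back in deletes a band whose height is $\delta_t$, the extent of the slit measured perpendicular to $\alpha_i(n)$, and the leaves missing the slit form an annulus embedded in $S$ of modulus at least $(\,1/\ell_{q_t}(\alpha_i(n))-\delta_t\,)/\ell_{q_t}(\alpha_i(n))$. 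The key point is that the product $\delta_t\,\ell_{q_t}(\alpha_i(n))$ equals the area of the parallelogram spanned by the slit vector $s\,u$ and the vector $(q^i_n,p^i_n)$; this area is preserved by the unimodular flow, so it equals its value at $t=0$, namely $s\,|q^i_n\theta_i-p^i_n|/\sqrt{1+\theta_i^2}$. Substituting and inverting the modulus gives precisely the bound in (1), the denominator being positive because $s<1$ and $|q^i_n\theta_i-p^i_n|<1<\sqrt{1+\theta_i^2}$.

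Statement (2) is where I expect the real work to lie, since it asserts which lattice vector is shortest rather than evaluating a single quantity. The systole of $Y_i'$ at time $t$ is the shortest nonzero vector of $g_t\Lambda$, and the length formula above shows the convergent vector $(q^i_n,p^i_n)$ attains its minimum over $t$ exactly at $T^i_n$. To conclude that only $\alpha_i(n)$ and $\alpha_i(n+1)$ can be systoles on $[T^i_n,T^i_{n+1}]$, I would invoke that the convergents are the best rational approximations of $\theta_i$: by (\ref{eq:denom_growth}) and (\ref{eq:det1}) every primitive vector other than $\pm(q^i_n,p^i_n)$ and $\pm(q^i_{n+1},p^i_{n+1})$ has strictly larger horizontal or strictly larger vertical displacement throughout this interval, hence is longer---this is the usual dictionary between the continued-fraction algorithm and the diagonal flow on the space of unit-area tori. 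Finally I transfer the conclusion to the slitted torus $Y_i$: a systole can be represented disjointly from the length-$s$ slit (the slit only lengthens curves forced to cross it), so the shortest $q_t$-curve of $Y_i$ is again $\alpha_i(n)$ or $\alpha_i(n+1)$, and the geodesic representative of $\alpha_i(n)$ has the same length on $Y_i$ as on $Y_i'$. The delicate points are ruling out all intermediate lattice vectors uniformly in $t$ and checking that reinserting the slit creates no shorter curve.
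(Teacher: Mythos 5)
A preliminary remark on the comparison you were asked for: the paper does not prove this theorem at all. It is imported verbatim from Lenzhen \cite{Lenzhen} (Lemma 1 and the proof of Lemma 3 there), so the only meaningful comparison is with Lenzhen's original argument, and your reconstruction follows essentially that route: flat coordinates normalized so the slit direction is vertical, convergents as primitive lattice vectors, and the cylinder/annulus picture on the filled torus $Y_i'$. Within that framework, your treatments of parts (3) and (1) are correct and essentially complete. The balance-time formula in (3) is exactly the computation you give. For (1), deleting from the cylinder of closed geodesics on $Y_i'$ the band of leaves meeting the slit, and observing that $\delta_t\,\ell_{q_t}(\alpha_i(n))$ equals the $g_t$-invariant area of the parallelogram spanned by the slit vector and $(q^i_n,p^i_n)$, yields precisely the constant $\sqrt{1+\theta_i^2}\,\big/\bigl(\sqrt{1+\theta_i^2}-s\,|q^i_n\theta_i-p^i_n|\bigr)$, with positive denominator since $s\,|q^i_n\theta_i-p^i_n|<1$ by (\ref{eq:denom_growth}).

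Part (2), however, contains a genuine gap, located exactly at the step you yourself flag as delicate. The proposed mechanism --- every other primitive vector ``has strictly larger horizontal or strictly larger vertical displacement throughout this interval, hence is longer'' --- is a non sequitur: a vector dominates in length only if it dominates in \emph{both} coordinates (or satisfies an honest length inequality), and component-wise domination by one of the two designated convergents fails for the most relevant competitors. Concretely, for $m<n$ the vector $(q^i_m,p^i_m)$ has horizontal part $|q^i_m\theta_i-p^i_m|$ strictly \emph{larger}, but vertical part $q^i_m+p^i_m\theta_i$ strictly \emph{smaller}, than those of both $(q^i_n,p^i_n)$ and $(q^i_{n+1},p^i_{n+1})$; your dichotomy holds for it, yet no length conclusion follows from the dichotomy alone. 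What is actually needed is a direct comparison, e.g.\ that for all $t\ge T^i_n$,
\[
e^{2t}\,|q^i_m\theta_i-p^i_m|^2\;\ge\; e^{2t}\,|q^i_n\theta_i-p^i_n|^2+e^{-2t}\,(q^i_n+p^i_n\theta_i)^2 ,
\]
which, using $e^{4T^i_n}=(q^i_n+p^i_n\theta_i)^2/|q^i_n\theta_i-p^i_n|^2$ and monotonicity in $t$, reduces to $|q^i_m\theta_i-p^i_m|^2\ge 2\,|q^i_n\theta_i-p^i_n|^2$. That in turn requires quantitative best-approximation input (available here because all partial quotients are at least $2$, so $|q^i_{n-1}\theta_i-p^i_{n-1}|\ge 2\,|q^i_n\theta_i-p^i_n|$), together with the analogous comparison for primitive vectors with $q\ge q^i_{n+1}$ and the sign cases where $p$ and $q$ have opposite signs, in which neither component can be small. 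This case analysis is the actual content of Lenzhen's lemma and cannot be replaced by an appeal to ``the usual dictionary.'' Similarly, the transfer to the slitted torus and the final sentence of (2) deserve more than ``the slit only lengthens curves forced to cross it'': the clean justification is that the $q_t$-geodesic representatives of $\alpha_i(n)$ in $S$ are exactly the cylinder leaves avoiding the slit, so the $S$-length and the $Y_i'$-length coincide, while any essential non-peripheral curve of $Y_i$ maps, under filling in the slit, to a curve of $Y_i'$ of no greater length.
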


\begin{remark} \label{remark:shortcurves}
{\rm There exists a constant $K$ such that given any unit area flat structure on a torus, there is a curve of length less than $K$. Thus, part 2 of Theorem \ref{theorem:lenzhen} tells us that for $t \in [T^i_{n}, T^i_{n+1}]$, we have $\ell_{q_t}(\alpha_i(n))$ or $\ell_{q_t}(\alpha(n+1))$, that is the length of the $q_t$-shortest curve in $Y_i$, is bounded uniformly above. }
\end{remark}
 Observe that $\beta_i$ is a closed leaf in the vertical foliation associated to $\G$. The following theorem of Choi, Rafi, and Series gives us useful information about how the projection of $\G$ to $\C(\beta_i)$ moves through $\C(\beta_i)$. 

\begin{theorem} [Theorem 5.13, \cite{CRS}] \label{theorem:twisting_teich_geo} There exists a constant $\epsilon_0$ depending only on $S$ such that the following holds. 
Let $\G$ be a Teichm\"uller geodesic with horizontal and vertical foliation $\nu^+$ and $\nu^-$, respectively. Suppose $\alpha$ is a closed leaf in $\nu^-$ and $\emph{\Ext}_t(\alpha) \leq \epsilon_0$. Then 
\[d_{\alpha}(\nu^+, X_t) \prec \frac{1}{\emph{\Hyp}_t(\alpha)}.\]
\end{theorem}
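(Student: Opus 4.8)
The plan is to show that the annular-projection distance $d_\alpha(\nu^+, X_t)$ is, up to a uniform error, the geometric twisting of the horizontal foliation $\nu^+$ about $\alpha$, and then to bound that twisting by the total modulus of the annuli surrounding $\alpha$, which Theorem \ref{theorem:Ext_sum_mod} identifies with $1/\Ext_t(\alpha)$. First I would recall that for a curve $\alpha$ of small extremal length the subsurface projection to $\C(\alpha)$ computes relative twisting: for transversals $\delta, \delta'$ to $\alpha$ one has $d_\alpha(\delta, \delta') \stackrel{+}{\asymp} |\mathrm{tw}_\alpha(\delta) - \mathrm{tw}_\alpha(\delta')|$, where $\mathrm{tw}_\alpha$ is measured against the $X_t$-perpendicular framing of $\alpha$ (see \cite{MMII}, \cite{Rafi_short}). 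Since $\Ext_t(\alpha) \leq \epsilon_0$, the curve $\alpha$ lies in $\base(\mu_{X_t})$ by Theorem \ref{theorem:collar_lemma_Ext}, so $\pi_\alpha(X_t)$ is represented by the marking transversal $\tau_\alpha$, an $X_t$-shortest curve meeting $\alpha$; such a curve crosses the hyperbolic collar of $\alpha$ essentially perpendicularly, whence $\mathrm{tw}_\alpha(\tau_\alpha) = O(1)$. Thus $d_\alpha(\nu^+, X_t) \stackrel{+}{\asymp} |\mathrm{tw}_\alpha(\nu^+)|$, and it suffices to bound the twisting of $\nu^+$ about $\alpha$.

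Next I would estimate this twisting in the flat metric $q_t$. Since $\alpha$ is a closed leaf of the vertical foliation $\nu^-$, its $q_t$-geodesic representatives sweep out a flat cylinder $F$ with core $\alpha$, flanked by the two expanding annuli $E$ and $G$ of Theorem \ref{theorem:Ext_sum_mod}. A leaf of $\nu^+$ is horizontal, hence transverse to $\alpha$, and the number of times it can wind about $\alpha$ as it traverses $E \cup F \cup G$ is controlled by the ratio of the flat width of these annuli to $\ell_{q_t}(\alpha)$ --- that is, by their moduli. Evaluating $\Mod_t(E)$ and $\Mod_t(G)$ by Rafi's estimate (Theorem \ref{theorem:modulus}) and $\Mod_t(F)$ from the flat geometry of the straight cylinder, this yields $|\mathrm{tw}_\alpha(\nu^+)| \prec \Mod_t(E) + \Mod_t(F) + \Mod_t(G)$.

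Finally I would convert. By Theorem \ref{theorem:Ext_sum_mod} we have $\Mod_t(E)+\Mod_t(F)+\Mod_t(G) \stackrel{*}{\asymp} 1/\Ext_t(\alpha)$, and by Maskit's comparison (Theorem \ref{theorem:maskit}), the bound $\Ext_t(\alpha) \leq \epsilon_0$ forces $\Hyp_t(\alpha)$ to be small as well, so that $\Ext_t(\alpha) \asymp \Hyp_t(\alpha)$ and hence $1/\Ext_t(\alpha) \asymp 1/\Hyp_t(\alpha)$. Chaining these estimates gives $d_\alpha(\nu^+, X_t) \prec 1/\Hyp_t(\alpha)$, as claimed.

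The main obstacle is the middle step: making precise, with constants depending only on $S$, the assertion that the winding of $\nu^+$ about $\alpha$ across the flat cylinder and the two expanding annuli is bounded by their moduli. This requires care, because twisting is a priori unbounded in a high-modulus annulus; the point is that $\nu^+$ enters and leaves each region along its straight horizontal leaves, so its winding against the perpendicular framing is genuinely comparable to height-over-circumference, while the discrepancy between the hyperbolic perpendicular and the flat-horizontal framing must be tracked through all three regions and shown to contribute at most the same order. This is precisely the content of \cite{CRS}, Theorem 5.13.
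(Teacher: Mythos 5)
The first thing to say is that the paper contains no proof of this statement to compare against: it is quoted verbatim as Theorem 5.13 of Choi--Rafi--Series \cite{CRS} and is used as a black box (it enters only in the proof of Claim 2 inside Lemma \ref{lemma:horoproj}). So your sketch has to stand or fall as an independent proof of the cited result, not as an alternative to anything in this paper.

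As a proof it has a genuine gap, and it sits exactly where you flag it. The outer reductions are correct and standard: identifying $d_\alpha(\nu^+, X_t)$ with relative twisting up to a uniform additive error, using Theorem \ref{theorem:collar_lemma_Ext} to see that $\pi_\alpha(X_t)$ is represented by a marking transversal that twists boundedly about $\alpha$, and converting $\Mod_t(E)+\Mod_t(F)+\Mod_t(G) \stackrel{*}{\asymp} 1/\Ext_t(\alpha) \stackrel{*}{\asymp} 1/\Hyp_t(\alpha)$ via Theorems \ref{theorem:Ext_sum_mod} and \ref{theorem:maskit}. But the middle step --- that the twisting of $\nu^+$ about $\alpha$, measured against the $X_t$-perpendicular framing, is bounded by the sum of these moduli --- \emph{is} the theorem; everything else is routine bookkeeping. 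You do not prove it, and your closing sentence concedes that this step ``is precisely the content of \cite{CRS}, Theorem 5.13,'' which makes the argument circular as a self-contained proof. I would also point out that the difficulty is not quite where your geometric picture puts it: a leaf of $\nu^+$ crosses the vertical cylinder $F$ perpendicularly and, since the expanding annuli contain no singularities in their interiors, it can wind only a bounded amount inside $E$ and $G$, so the twisting of $\nu^+$ relative to the \emph{flat} framing is $O(1)$, not of the order of the moduli. The entire $1/\Ext_t(\alpha)$ (equivalently $1/\Hyp_t(\alpha)$) contribution comes from comparing the flat-perpendicular framing of $\alpha$ with the hyperbolic-perpendicular one, i.e.\ from an estimate of the form $\mathrm{tw}_\alpha(q_t, X_t) \prec 1/\Ext_t(\alpha)$; proving that framing comparison (via the geometry of $F$, $E$, $G$, in the spirit of Theorem \ref{theorem:modulus}) is the real content of \cite{CRS}. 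So what you have written is an accurate plan for organizing a proof, together with a correct identification of its crux, but the crux itself is deferred to the very result being proved.
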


\section{Form of accumulation points of Teichm\"uller geodesics} \label{sec:form_acc_pt}

Throughout this section,  for each $i=0,1,2$ we fix sequences $(\theta_i(j))_{j=1}^\infty$ and $(n_i(j))_{j=1}^\infty$, where $\theta_i(j) \geq 2$ for all $i,j$. We then define \[\theta_i=[0;\underbrace{\theta_i(1), \ldots, \theta_i(1)}_{n_i(1)}, \ldots, \underbrace{\theta_i(j), \ldots, \theta_i(j)}_{n_i(j)}, \ldots ].\]
Additionally, we fix a slit length $s$. We let $S$ denote the genus $3$ surface and let $\G:[0,\infty) \rightarrow \T(S)$ denote the Teichm\"uller geodesic ray associated to $(\theta_0,\theta_1, \theta_2)$ with slit length $s$. 
Define $N_i(0)=0$ and for $k \geq 1$ define $\displaystyle{N_i(k)=\sum_{j=1}^kn_i(j)}$.

In this section, through a sequence of lemmas, we will show that if
the sequences $(n_i(j))_{j=1}^\infty$ grow sufficiently fast,  then there exists $\eta_i \in \partial \C(Y_i)$ such that every point in the limit set of $\G$ is of the form $\sum \limits_{i=0}^2 c_{Y_i} \eta_{i}$ for some  $c_{Y_i} \geq 0$. 

We begin with Lemma \ref{lemma:curvegraph_progress}, where we establish that the projection of $\G$ to $\C(Y_i)$ converges to a unique point $\eta_i \in  \partial \C(Y_i)$. 
This is almost immediate from the result of Rafi \cite{Rafi} that the projection of any Teichm\"uller geodesic to $\C(Y)$ is an unparameterized quasi-geodesic for every non-annular subsurface $Y$, but  we provide a direct proof in our setting that will also reveal some information about when $\G$ makes progress in $\C(Y_i)$ that will be useful later. From Lemma \ref{lemma:curvegraph_progress}, 
it will follow that 
 every point in the limit set of $\G$ is of the form 
\[\sum \limits_{i=0}^2 c_{Y_i}\eta_i +c_{\beta_i} \eta_{\beta_i},\]
where $\eta_{\beta_i}$ is the point in $\partial \Ho_{\beta_i}$. To determine what the constants $c_{Y_i}$ and $c_{\beta_i}$ can be, we must understand how fast the projection of $\G$ moves through each of the $\C(Y_i)$ and $\Ho_{\beta_i}$ for $i=0,1,2$ relative to one another. 
In Lemma \ref{lemma:curvegraph_progress}, we will see that $d_Y(X_0,X_{T^i_n}) \stackrel{+}{\asymp} n$.   From here,   we use Theorem \ref{theorem:lenzhen} to provide useful estimates for balance times $T^i_n$ (Lemma \ref{lemma:heavylifting}).  We will then prove Lemma \ref{lemma:horoproj}, which puts an upper bound on how fast the projection of $\G$ can move through a horoball $\Ho_{\beta_i}$.  We use this upper bound to prove that if  the sequence $(n_i(j))_{j=1}^\infty$ grows fast enough, then $c_{\beta_i}=0$ (Lemma \ref{lemma:small_horo_curve_graph_ratios}).

To simplify the notation, throughout the rest of this section, we will fix $i \in \{0,1,2\}$ and suppress $i$ in all the  associated notations. In particular, $Y, T_n, \theta(j), \beta, $ and $q_n$ will denote  $Y_i, T^i_n, \theta_i(j), \beta_i, $ and $q^i_n$, respectively. 

\

\begin{lemma} \label{lemma:curvegraph_progress}
 For all $n \geq 1$,  \[t \in [T_{n-1}, T_{n}] \hspace{5pt} \Rightarrow \hspace{5pt} d_{Y}(X_0, X_t)\stackrel{+}{\asymp} n.\]
Thus, the projection of $\mathcal{G}$ to $\C(Y)$ is an unparameterized quasi-geodesic converging to a unique point in $\eta_i \in \partial \C(Y)$.
\end{lemma}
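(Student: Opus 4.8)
The plan is to identify $\C(Y)$ with the Farey graph and to show that the projection path $t \mapsto \pi_Y(X_t)$ tracks, up to bounded error, the sequence of convergent curves $\alpha(0), \alpha(1), \alpha(2), \dots$, which forms a geodesic ray in $\C(Y)$ precisely because every continued fraction entry of $\theta$ is at least $2$.

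First I would pin down the projection. Since $Y$ is a once-punctured torus, $\C(Y)$ is the Farey graph, whose vertices are slopes and in which two curves are adjacent exactly when they intersect once. By \eqref{eq:det1} consecutive convergents satisfy $|p_n q_{n+1} - q_n p_{n+1}| = 1$, so $\alpha(n)$ and $\alpha(n+1)$ intersect once and are therefore adjacent in $\C(Y)$; thus $\alpha(0), \alpha(1), \dots$ is an edge path, and in particular $d_Y(\alpha(0), \alpha(n)) \le n$. Next, for $t \in [T_{n-1}, T_n]$, Theorem \ref{theorem:lenzhen}(2) together with Remark \ref{remark:shortcurves} tells us that the $q_t$-shortest curve in $Y$ is $\alpha(n-1)$ or $\alpha(n)$, with $q_t$-length bounded above independently of $t$ and $n$. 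Because the projection of a short marking to a subsurface coarsely equals the shortest curve there (a standard consequence of the relationship between extremal length, hyperbolic length, and subsurface projection), we get $d_Y(\pi_Y(X_t), \{\alpha(n-1), \alpha(n)\}) \le C$ for a uniform $C$, and likewise $\pi_Y(X_0)$ lies within $C$ of $\alpha(0)$. Combining this with the edge path above and the triangle inequality yields the upper bound $d_Y(X_0, X_t) \stackrel{+}{\prec} n$.

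The heart of the matter, and the step I expect to be the main obstacle, is the matching lower bound $n \stackrel{+}{\prec} d_Y(X_0, X_t)$, i.e. that the convergent path is not merely an edge path but a geodesic up to bounded additive error. This is exactly where the standing hypothesis $\theta(j) \ge 2$ enters: from the recurrence \eqref{eq:recurrence} and \eqref{eq:det1} one computes $|p_{m-1} q_{m+1} - q_{m-1} p_{m+1}| = a_{m+1}$, so when every entry is at least $2$ no convergent is adjacent to the convergent two steps away, and the pivoting description of Farey geodesics shows there are no longer shortcuts across the convergent path either. I would make this precise by arguing that the entries-at-least-$2$ condition forces the $\mathbb{H}^2$-geodesic toward $\theta$ to turn genuinely at each convergent, so that the convergents occur as the consecutive vertices of the Farey geodesic ray from $\alpha(0)$ to the endpoint determined by $\theta$, giving $d_Y(\alpha(0), \alpha(n)) \stackrel{+}{\asymp} n$ and hence the claimed $d_Y(X_0, X_t) \stackrel{+}{\asymp} n$. (Alternatively one may invoke Rafi's theorem \cite{Rafi} that the projection of a Teichm\"uller geodesic to $\C(Y)$ is an unparameterized quasi-geodesic; the direct argument is preferred here because it pins down the sharp additive progress rate $d_Y(X_0, X_{T_n}) \stackrel{+}{\asymp} n$ on which the later lemmas depend.)

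Finally, for the last sentence I would use that $\C(Y)$ is Gromov hyperbolic \cite{MasurMinsky1}, that $d_Y(X_0, X_t) \to \infty$, and that by the previous steps the projection path stays within bounded Hausdorff distance of the genuine geodesic ray $\alpha(0), \alpha(1), \dots$. By stability of quasi-geodesics in hyperbolic spaces, this ray converges to a unique point $\eta_i \in \partial \C(Y)$ --- the boundary point determined by the limit of the convergents, equivalently by $\theta_i$ --- and the projection of $\G$ converges to the same point.
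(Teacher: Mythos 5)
Your skeleton---show that the convergent curves form a geodesic in $\C(Y)$ thanks to the entries-at-least-$2$ condition, show that the projections of $X_0$ and $X_t$ coarsely track $\alpha(0)$ and $\{\alpha(n-1),\alpha(n)\}$, then finish with the triangle inequality and hyperbolicity---is the same as the paper's (the paper simply cites Series for the exact equality $d_Y(\alpha(0),\alpha(n))=n$ rather than re-proving it by a pivoting argument). The genuine gap is the step you label a ``standard consequence'': passing from ``$\alpha(n-1)$ or $\alpha(n)$ has uniformly bounded $q_t$-length in $Y$'' to ``$d_Y(\pi_Y(X_t),\{\alpha(n-1),\alpha(n)\})\le C$.'' There is no citable fact that the short-marking projection to $Y$ coarsely equals the \emph{flat}-shortest curve of $Y$; the relevant standard fact concerns curves of bounded \emph{extremal} length: if $\Ext_{X_t}(\alpha)$ is uniformly bounded, then Minsky's inequality (\ref{eq:intersection2}) bounds $i(\alpha,\beta)$ for every $\beta\in\base(\mu_{X_t})$ (each such $\beta$ having uniformly bounded extremal length), whence $d_Y(\alpha,X_t)$ is bounded. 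And a bounded $q_t$-length does \emph{not} imply a bounded extremal length: $\Ext_{X_t}(\alpha)$ is the supremum of $\ell_\rho(\alpha)^2/\mathrm{Area}(\rho)$ over all metrics $\rho$ in the conformal class, so the flat metric only gives a \emph{lower} bound on $\Ext_{X_t}(\alpha)$; an upper bound requires exhibiting a suitable annulus. That conversion is exactly what part 1 of Theorem \ref{theorem:lenzhen} supplies (you invoke only part 2): $\Ext_t(\alpha(m))\prec \ell^2_{q_t}(\alpha(m))$, whose implicit constant is uniform only after one checks, via (\ref{eq:denom_growth}), $\theta(j)\ge 2$, and $s,\theta\in(0,1)$, that the coefficient $\sqrt{1+\theta^2}\,/\bigl(\sqrt{1+\theta^2}-s|q_m\theta-p_m|\bigr)$ is bounded. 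The same issue recurs at the basepoint: ``likewise $\pi_Y(X_0)$ lies within $C$ of $\alpha(0)$'' also needs an extremal length bound, which the paper gets by exhibiting the flat cylinder with core $\alpha(0)$, of modulus at least $1/4$, so that $\Ext_0(\alpha(0))\le 4$.

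A secondary remark: you identify the lower bound $n \stackrel{+}{\prec} d_Y(\alpha(0),\alpha(n))$ as ``the heart of the matter.'' In the paper this is the easy part---it is Series' theorem, quoted as equation (\ref{eq:curve_graph_dist})---and your pivoting sketch, while a plausible route to re-proving it, is also left at the level of intention (``I would make this precise by arguing\ldots''). So as written both halves of your argument contain unproven steps, and the one on which the paper actually spends its effort, namely controlling the subsurface projection through extremal length rather than flat length, is the one you dismiss as standard.
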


\begin{proof}

Let $n \geq 1$. By (\ref{eq:det1}) the curves $\alpha(n-1)$ and $\alpha(n)$ are adjacent in $\C(Y)$. In fact, because the convergent $p_n/q_n$ has a depth $n$ continued fraction expansion with all but the zeroth coefficient at least 2, we have
 \begin{equation} \label{eq:curve_graph_dist}  d_{Y}(\alpha(0), \alpha(n))=n.
 \end{equation} 
 (See \cite{Series}). 
Fix  $t \in [T_{n-1}, T_{n}]$. By the triangle inequality, we have
 \[|d_Y(X_0, X_t) -n|=|d_Y(X_0, X_t)-d_Y(\alpha(0), \alpha(n))| \leq d_Y(X_0, \alpha(0))+d_Y(\alpha(n), X_t).\]
 We now show that  $d_Y(X_0, \alpha(0))$ and $d_Y(\alpha(n), X_t)$ are each bounded above by a constant depending only on $S$. 
 
 Consider the Euclidean cylinder $A$ in $S$ with core $\alpha(0)$ that is the union of the  $q_0$-geodesic representatives of $\alpha(0)$. Because $\theta, s \in (0,1)$, we have that $\Mod_0(A)  \geq \frac{1}{4}$. Thus, $\Ext_0(\alpha(0)) \leq 4$. Now for every curve $\beta$ in the base of the short marking $\mu_0$ on $X_0$, we also have that $\Ext_{0}(\beta)$ is bounded uniformly above by a constant depending only on $S$ (see \cite{Minsky4} and  Theorem 2.3 in \cite{Rafi}). So by inequality (\ref{eq:intersection2}),  the intersection of $\alpha(0)$ with every curve in  $\base(\mu_0)$ is bounded above uniformly. Therefore, $d_Y(X_0, \alpha(0))$ is bounded above uniformly.  
 
 Observe that (\ref{eq:denom_growth}) together with the fact that $\theta(j) \geq 2$ for all $j$ and $\theta, s \in (0,1)$ imply that $ \frac{\sqrt{1+\theta^2}}{\sqrt{1+\theta^2}-s|q_n\theta-p_n|} $ is bounded above by a uniform constant. So, Theorem \ref{theorem:lenzhen} tells us  
 \begin{equation}  \label{eq:ext<flat} 
 \Ext_{t}(\alpha(m)) \prec \ell^2_{q_t}(\alpha(m)) \hspace{7pt} \text{ for all } \hspace{5pt} m \geq 0.
 \end{equation} 
Combining this with Remark \ref{remark:shortcurves}, we find that $\Ext_{t}(\alpha(n-1))$ or $\Ext_{t}(\alpha(n))$ is bounded above uniformly.  An argument similar to that used above for $d_Y(X_0,\alpha(0))$ together with the fact that $d_Y(\alpha(n-1), \alpha(n))=1$ implies
$d_Y(\alpha(n), X_t)$ is bounded above by a uniform constant, as desired. 
 Therefore, \[d_Y(X_0, X_t) \stackrel{+}{\asymp} n.\]
Because this coarse equality is true for all $n$,  the projection of $\G$ to $\C(Y)$ is an unparameterized quasi-geodesic. Consequently, $\{\pi_Y(X_t)\}_{t \geq 0}$ accumulates on a unique point in $\partial \C(Y)$.

\end{proof}

\textbf{Remark:} Theorem \ref{theorem:lenzhen} of Lenzhen gives us an exact formula for $T_n$, but this formula is insufficient for our purposes because it requires us to know $\theta$ exactly. In Lemma \ref{lemma:heavylifting} we use Lenzhen's formula as a starting point to show that 
an initial segment of length $n+1$ of the continued fraction expansion of $\theta$ is all that is required to obtain a coarse estimate for $T_n$. 
We remark that Lenzhen, Modami, and Rafi \cite{LMR} also provided a coarse estimate with this property. The estimates we present in Lemma \ref{lemma:heavylifting} are more useful to us because the continued fractions we consider will have long stretches of the same number. 

\

Before stating the lemma, for $x \in \mathbb{R}$ we define 
\[\lambda(x)=\frac{x+\sqrt{x^2+4}}{2} \hspace{10pt} \text{ and } \hspace{10pt} \ol{\lambda}(x)=\frac{x-\sqrt{x^2+4}}{2} .\]

\begin{lemma} \label{lemma:heavylifting}
 There exists a uniform additive error such that for all $j \geq 1$ the following hold.  
\begin{enumerate}
\item For all $0 \leq \ell \leq n(j)$ we have \[\log q_{N(j-1)+\ell} \stackrel{+}{\asymp} \log q_{N(j-1)}+\ell\log \lambda(\theta(j)).\]
\item For all $ 0 \leq \ell \leq n(j)-1$, we have 
\[T_{N(j-1)+\ell} \stackrel{+}{\asymp} \log q_{N(j-1)}+(\ell+1/2)\log \lambda(\theta(j)).\]
\end{enumerate}
\end{lemma}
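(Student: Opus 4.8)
I need to prove two claims about continued fraction convergents. Let me understand the setup carefully.\textbf{The plan.}
The key is to exploit the special structure of the continued fraction: within the $j$-th block (indices from $N(j-1)$ to $N(j)$) every partial quotient equals the constant $\theta(j)$, so the recurrence (\ref{eq:recurrence}) becomes a \emph{linear recurrence with constant coefficients} $q_{m}=\theta(j)q_{m-1}+q_{m-2}$. The plan is to prove part (1) first by solving this recurrence exactly, then to derive part (2) from part (1) by feeding the resulting estimate for $\log q$ into Lenzhen's formula $T_n=\tfrac12\log\frac{p_n\theta+q_n}{|q_n\theta-p_n|}$ (Theorem \ref{theorem:lenzhen}(3)) together with the growth estimate (\ref{eq:denom_growth}).

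\textbf{Part (1): solving the recurrence.} Fix $j$ and write $a=\theta(j)$, so for $0\le \ell\le n(j)$ the denominators satisfy $q_{N(j-1)+\ell}=a\,q_{N(j-1)+\ell-1}+q_{N(j-1)+\ell-2}$. The characteristic equation is $x^2-ax-1=0$, whose roots are exactly $\lambda(a)=\tfrac{a+\sqrt{a^2+4}}{2}$ and $\ol\lambda(a)=\tfrac{a-\sqrt{a^2+4}}{2}$, which is why those quantities were defined just before the lemma. Thus
\[
q_{N(j-1)+\ell}=A\,\lambda(a)^{\ell}+B\,\ol\lambda(a)^{\ell}
\]
for constants $A,B$ determined by the two initial values $q_{N(j-1)}$ and $q_{N(j-1)-1}$. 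Since $a\ge 2$ we have $\lambda(a)\ge 1+\sqrt2>1$ and $|\ol\lambda(a)|=1/\lambda(a)<1$ (because $\lambda(a)\ol\lambda(a)=-1$), so the $B$-term is bounded and the $\lambda(a)^\ell$-term dominates. The only care needed is to check that the dominant coefficient $A$ is comparable to $q_{N(j-1)}$ up to a \emph{uniform} multiplicative constant (uniform over $j$, hence over $a\ge 2$); this follows because $A\stackrel{\ast}{\asymp} q_{N(j-1)}$ can be read off from the two initial conditions using $q_{N(j-1)-1}\le q_{N(j-1)}$ and the separation of the roots. Taking logarithms then gives $\log q_{N(j-1)+\ell}\stackrel{+}{\asymp}\log q_{N(j-1)}+\ell\log\lambda(a)$ with a uniform additive error, which is exactly the claim.

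\textbf{Part (2): from denominators to balance times.} By Theorem \ref{theorem:lenzhen}(3), $T_{N(j-1)+\ell}=\tfrac12\log\big(p_m\theta+q_m\big)-\tfrac12\log\big|q_m\theta-p_m\big|$ where $m=N(j-1)+\ell$. For the first term, since $0<\theta<1$ and $p_m\le q_m$ by (\ref{eq:Ext2}), we have $p_m\theta+q_m\stackrel{\ast}{\asymp}q_m$, so $\tfrac12\log(p_m\theta+q_m)\stackrel{+}{\asymp}\tfrac12\log q_m$. For the second term, (\ref{eq:denom_growth}) gives $\tfrac{1}{q_m+q_{m+1}}\le |q_m\theta-p_m|\le \tfrac{1}{q_{m+1}}$, so $-\tfrac12\log|q_m\theta-p_m|\stackrel{+}{\asymp}\tfrac12\log q_{m+1}$ (using $q_m\le q_{m+1}$ to control $q_m+q_{m+1}\stackrel{\ast}{\asymp}q_{m+1}$). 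Hence $T_m\stackrel{+}{\asymp}\tfrac12\log q_m+\tfrac12\log q_{m+1}$. Now apply part (1) to both $q_m$ and $q_{m+1}$: since $\ell\le n(j)-1$, the index $m+1=N(j-1)+(\ell+1)$ still lies in block $j$, so part (1) applies to it with the same base $\lambda(\theta(j))$, giving $\log q_{m+1}\stackrel{+}{\asymp}\log q_{N(j-1)}+(\ell+1)\log\lambda(\theta(j))$. Averaging the two estimates yields $T_{N(j-1)+\ell}\stackrel{+}{\asymp}\log q_{N(j-1)}+(\ell+\tfrac12)\log\lambda(\theta(j))$, as desired.

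\textbf{Main obstacle.} The delicate point is keeping the additive error \emph{uniform in $j$}, i.e.\ independent of the block constant $\theta(j)\in\{2,3,\dots\}$. The danger is that the coefficients $A,B$ in the closed form, or the comparison constants hidden in the $\stackrel{\ast}{\asymp}$ and $\stackrel{+}{\asymp}$ symbols, could degenerate as $\theta(j)\to\infty$. I expect this to be controllable precisely because $a\ge 2$ forces $\lambda(a)\ge 1+\sqrt2$ and $|\ol\lambda(a)|\le 1/(1+\sqrt2)$, giving a \emph{uniform} spectral gap between the two roots; this bounds the subdominant contribution and the coefficient ratios independently of $a$, so the additive errors stay uniform throughout. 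The requirement $\ell\le n(j)-1$ in part (2) is exactly what guarantees $m+1$ stays within the same block so that the single base $\lambda(\theta(j))$ governs both logarithms.
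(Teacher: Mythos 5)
Your proposal is correct and follows essentially the same route as the paper: solve the constant-coefficient recurrence within the $j$-th block using the roots $\lambda(\theta(j))$, $\ol{\lambda}(\theta(j))$, use the uniform spectral gap from $\theta(j)\geq 2$ to bound the subdominant term and show the dominant coefficient is comparable to $q_{N(j-1)}$, then combine Lenzhen's formula with (\ref{eq:denom_growth}) and $p_n\leq q_n$ to get $T_n \stackrel{+}{\asymp} \tfrac12\log q_nq_{n+1}$ and feed in part (1). The only cosmetic difference is your choice of initial conditions ($q_{N(j-1)-1},q_{N(j-1)}$ rather than the paper's $q_{N(j-1)},q_{N(j-1)+1}$), which changes nothing substantive.
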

\begin{proof}  Fix  $j \geq 1$. 

\

\noindent \textbf{Proof of 1}: Equation (\ref{eq:recurrence}) says the $q_n$ are given recursively by 
\begin{equation} \label{eq:recursionqi} q_{N(j-1)+\ell}=\theta(j)q_{N(j-1)+\ell-1}+q_{N(j-1)+\ell-2} \hspace{10pt} \text{when} \hspace{5pt} 1\leq \ell \leq n(j).
\end{equation} 
The solution to this recursion is 
\[q_{N(j-1)+\ell}=A(j)\lambda(\theta(j))^{\ell}+B(j)\overline{\lambda}(\theta(j))^{\ell} \hspace{25pt} 0 \leq \ell \leq n(j),\]
where we define 
\begin{equation} \label{eq:AiBi} A(j)= \frac{q_{N(j-1)+1}-\overline{\lambda}(\theta(j))q_{N(j-1)}}{\lambda(\theta(j))-\overline{\lambda}(\theta(j))} \hspace{5pt} \text{ and } \hspace{5pt} B(j)= \frac{q_{N(j-1)} \lambda(\theta(j))-q_{N(j-1)+1}}{\lambda(\theta(j))-\overline{\lambda}(\theta(j))}.
\end{equation}

If $\ell=0$, statement 1 is clearly true. So  assume $1 \leq \ell \leq n(j)$. Observe that $-1 <\ol{\lambda}(\theta(j)) <0$ and  $\lambda(\theta(j)) >1$. This with equation (\ref{eq:recurrence}) and our assumption that $\theta(j) \geq 2$ implies

\begin{align*}
 \left|\frac{B(j)\overline{\lambda}(\theta(j))^{\ell}}{A(j)\lambda(\theta(j))^{\ell}}  \right|
 &=\left|\frac{q_{N(j-1)}\lambda(\theta(j))-q_{N(j-1)+1}}{q_{N(j-1)+1}-\overline{\lambda}(\theta(j))q_{N(j-1)}} \left(\frac{\overline{\lambda}(\theta(j))^{\ell}}{\lambda(\theta(j))^{\ell}}\right) \right| \\
& \leq \left|\frac{q_{N(j-1)}\lambda(\theta(j))-q_{N(j-1)+1}}{q_{N(j-1)+1}} \left(\frac{\overline{\lambda}(\theta(j))^{\ell}}{\lambda(\theta(j))^{\ell}}\right) \right| \\
& \leq \left|\frac{\overline{\lambda}(\theta(j))^\ell}{\lambda(\theta(j))^{\ell-1}} \right| + \left| \frac{\overline{\lambda}(\theta(j))^{\ell}}{\lambda(\theta(j))^{\ell}}\right| \\
& \leq 2 |\overline{\lambda}(\theta(j))| \hspace{5pt}  \leq  \hspace{5pt} 2|\overline{\lambda}(2)| .
\end{align*}
This implies that 
\begin{align} \label{eq:qni_est}
|\log q_{N(j-1)+\ell}- \log(A(j)\lambda(\theta(j))^{\ell})|&=
|\log[A(j)\lambda(\theta(j))^{\ell} +B(j)\overline{\lambda}(\theta(j))^{\ell}]- \log(A(j)\lambda(\theta(j))^{\ell})|  \nonumber\\ 
& =\left|\log\left(1+\frac{B(j)\overline{\lambda}(\theta(j))^{\ell}}{A(j)\lambda(\theta(j))^{\ell}} \right) \right| \nonumber\\
& \leq  |\log(1+2 \overline{\lambda}(2))|. 
\end{align}

To complete the proof of statement (1), we now show  $\log A(j) \stackrel{+}{\asymp} \log q_{N(j-1)}$.
It follows directly from  (\ref{eq:recursionqi}) and (\ref{eq:AiBi}) that 
\[\log A(j)  \leq \log \frac{2q_{N(j-1)+1}}{\lambda(\theta(j))} = \log  \frac{2(\theta(j) q_{N(j-1)}+q_{N(j-1)-1})}{\lambda(\theta(j))} \leq \log 4q_{N(j-1)},\]
and
\[\log A(j) \geq \log \frac{q_{N(j-1)+1}}{\lambda(\theta(j))-\ol{\lambda}(\theta(j))} \geq \log \frac{\theta(j) q_{N(j-1)}}{2\theta(j)} \stackrel{+}{\succ} \log q_{N(j-1)}.\]

\

\noindent \textbf{Proof of 2:} Let $n\geq 0$. 
Theorem \ref{theorem:lenzhen} (Lenzhen) tells us $T_n= \frac{1}{2}\log \frac{p_{n}\theta +q_{n}}{|q_{n}\theta -p_{n}|}$. We will use this  to first show that  $T_n$ is coarsely  $\frac{1}{2}\log q_nq_{n+1}$. 
We remark that Lenzhen, Modami, and Rafi \cite{LMR} obtain this same coarse estimate for the sequences they consider. Because our sequences do not fit their form, we derive the estimate for sequences in our setting. 

By   (\ref{eq:denom_growth}), we have

\[\frac{p_n\theta+q_n}{|q_n\theta-p_n|}  \geq (p_n\theta+q_n)q_{n+1} \geq  q_nq_{n+1},\]
and applying (\ref{eq:denom_growth}) and (\ref{eq:Ext2}) and the fact that $q_{n+1} >q_n$, we find
\[\frac{p_n\theta+q_n}{|q_n\theta-p_n|}  \leq (p_n+q_n)(q_n+q_{n+1}) 
 \leq (q_n)^2+q_nq_{n+1}+(q_n)^2+q_nq_{n+1} 
  \leq  4q_nq_{n+1}. \]
These inequalities show that
\begin{equation} \label{eq:Tni_est} T_n \stackrel{+}{\asymp} \frac{1}{2}\log q_{n}q_{n+1} \hspace{10pt} \text{for all  } n \geq 0.
\end{equation} 
 This together with statement 1 of the lemma implies that for  $0\leq \ell \leq n(j)-1$ 
\[T_{N(j-1)+\ell} \stackrel{+}{\asymp} \log q_{N(j-1)}+ (\ell+1/2) \log(\lambda(\theta(j))). \]

\end{proof}

The projection  of $\mathcal{G}$ to the horoball $\Ho_\beta$ depends on whether or not  the extremal length of $\beta$ is small. Thus, we now show that if  the slit length $s$ happens to be small enough, then the extremal length of $\beta$ is small at every time along $\G$. 

\begin{lemma} \label{lemma:ext_beta_small} 
If $s$ is sufficiently small, then $\Ext_t(\beta) \leq \frac{\epsilon_0}{e}$ for all $t \geq 0$.
\end{lemma}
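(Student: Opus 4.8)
The plan is to bound $\Ext_t(\beta)$ from above by producing, for each $t \ge 0$, a single embedded annulus $A_t$ with core $\beta$ of large modulus, and then to use the defining inequality $\Ext_t(\beta) \le 1/\Mod_t(A_t)$ directly (rather than Theorem \ref{theorem:Ext_sum_mod}, whose hypothesis $\Ext_t(\beta) \le \epsilon_0$ is precisely what we are trying to establish). Since we are free to shrink $s$, it suffices to show that $\Mod_t(A_t) \to \infty$ as $s \to 0$, uniformly in $t$: then for $s$ small every $\Mod_t(A_t)$ exceeds $e/\epsilon_0$, giving $\Ext_t(\beta) \le \epsilon_0/e$ simultaneously for all $t \ge 0$.

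The first step is to show that $\beta$ stays short along the entire ray. At time $0$ its $q_0$-geodesic representative runs alongside the slit, so $\ell_{q_0}(\beta)$ is comparable to $s$ and in particular tends to $0$ as $s \to 0$. The key point — and the reason the paper records that $\beta$ is a closed leaf of the vertical foliation $\nu^-$ — is that a vertical representative of $\beta$ contracts purely vertically under the Teichm\"uller flow, so $\ell_{q_t}(\beta) \le \ell_{q_0}(\beta)$ for every $t \ge 0$. Hence $\ell_{q_t}(\beta) \to 0$ as $s \to 0$, uniformly in $t$; this is exactly what prevents $\beta$ from becoming long once we leave the basepoint.

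The second step feeds this into Theorem \ref{theorem:modulus}. I would take $A_t$ to be the equidistant one-sided collar of the $q_t$-geodesic representative $\gamma_0$ of $\beta$, grown until the advancing front first meets a singularity of $q_t$; then $\gamma_0$ is geodesic, the two boundaries are equidistant, and the interior is singularity-free, so the hypotheses of Theorem \ref{theorem:modulus} hold. The only singularities of $q_t$ are the two slit endpoints, and the horizontal stretching of the flow only moves them apart, so the width $d_t$ of $A_t$ is bounded below by a constant $d_0 > 0$ independent of $t$ and of $s$. Theorem \ref{theorem:modulus} then yields
\[
\Mod_t(A_t) \succ \log\!\left(\frac{d_t}{\ell_{q_t}(\beta)}\right) \ge \log\!\left(\frac{d_0}{\ell_{q_0}(\beta)}\right),
\]
whose right-hand side tends to $\infty$ as $s \to 0$ uniformly in $t$, completing the reduction of the first paragraph.

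The main obstacle is the flat-geometry input behind the third step: verifying that for every $t$ there genuinely is an equidistant collar of $\beta$ of definite width containing no singularity, i.e. that $d_t \ge d_0$ holds uniformly as the flat structure degenerates under the flow and as $s \to 0$. This requires a careful look at the slit picture of Figure \ref{glued_up} to be sure the neighboring squares do not push a singularity close to $\beta$; the comparison between $\ell_{q_0}(\beta)$ and $s$ used in the second step rests on the same direct inspection of the flat picture. Once these two geometric facts are in hand, the extremal-length bound follows formally.
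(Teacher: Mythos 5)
There is a genuine gap, and it sits exactly where you flagged it: the uniform lower bound $d_t \ge d_0 > 0$ is false. What stops the collar on the $Y$ side from growing is not the front meeting a singularity but failure of embeddedness: the front wraps around the torus $Y$ and meets itself once its radius is roughly half the $q_t$-systole of $Y$. The paper records this explicitly in the proof of Lemma \ref{lemma:horoproj}, Claim 1: $d_t$ is at most half the length of the shortest $q_t$-curve in $Y$. That systole is not bounded below along the ray: in general one can only say it shrinks no faster than $e^{-t}$, and at the balance time $T_n$ the shortest curve in $Y$ has $q_t$-length comparable to $\sqrt{q_n/q_{n+1}}$, hence at most about $\theta(j)^{-1/2}$. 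Since the lemma is needed with an $s$-threshold independent of $\theta$ (and condition (\ref{eq:shrinking_ratios}) later forces $\theta_i(j) \to \infty$, so these systoles really do tend to $0$), no constant $d_0$ independent of $t$ and $\theta$ can exist. Your geometric heuristic is also off: on the glued surface the slit endpoints are the two cone points of angle $6\pi$, they lie \emph{on} the geodesic representative $\gamma_0$ itself, and their separation along the (vertical) slit is $se^{-t}$, so the flow pushes them together, not apart; and on the side away from $Y$ the expanding annulus has width at most $\tfrac{1}{2}se^{-t}$ (same Claim 1), so that side gives nothing.

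The gap is reparable, and the repair is essentially the paper's proof. Since every $q_t$-length shrinks at most by the factor $e^{-t}$, one does get $d_t \ge e^{-t}d_0$ with $d_0$ a time-zero width independent of $s$; but then your step-one inequality $\ell_{q_t}(\beta)\le\ell_{q_0}(\beta)$ is too weak to conclude. You need the exact rate $\ell_{q_t}(\beta) = 2se^{-t}$ (the slit is vertical, so it contracts at exactly the maximal rate), so that the two exponentials cancel and $d_t/\ell_{q_t}(\beta) \ge d_0/(2s)$ uniformly in $t$, which blows up as $s\to 0$. The paper implements precisely this cancellation, but with a round annulus $A_t$ in the $q_t$-metric (inner radius $\tfrac{1}{2}se^{-t}$ about the slit, outer radius $R(t)\ge e^{-t}R(0)$), whose modulus is computed exactly as $\tfrac{1}{2\pi}\log\bigl(R(t)/r(t)\bigr) \ge \tfrac{1}{2\pi}\bigl(\log 2R(0)+\log(1/s)\bigr)$; this sidesteps Theorem \ref{theorem:modulus} entirely, and with it both its implicit constants and the need to verify that a collar stopped at the embeddedness threshold (rather than at a singularity) satisfies its hypotheses.
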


\begin{proof} Let $t \geq 0$. 
 Because $\Ext_t\beta= \inf \frac{1}{\Mod_t A}$, where the infimum is taken over all annuli $A$ in $S$ with core $\beta$, to show $\Ext_t\beta$ is small, we must
exhibit such an annulus with large modulus. 
\begin{figure} 
\begin{center}
\begin{tikzpicture}
\coordinate  (n1) at ({cos(60) -sin(60)}  ,  {cos(60) +sin(60) } ) {};
\coordinate  (n2) at ({cos(60) +sin(60)} ,  {-cos(60) +sin(60)} ) {};
\coordinate  (n3) at ({-cos(60) +sin(60)} ,  {-cos(60) -sin(60)} ) {};
\coordinate  (n4) at ({-cos(60) -sin(60)} ,  {cos(60) -sin(60)} ) {};

\filldraw (0,0)[fill=gray!40!] circle (.99);
\filldraw (0,0)[fill=white]  circle (.6);

\node(A0) at (-1,-1.2) {$A_0$};

\draw [->] (-1.3,-1.1) to  [out=130,in=160](-.7,-.4); 

\draw (0,.6)--(0,-.6);

\draw  (n1)--(n2);
\draw (n2)--(n3);
\draw (n3)--(n4);
\draw (n4)--(n1);
\draw (n1)--(n2)--(n3)--(n4)--cycle;

\draw [->] (1.7,0) -- (4,0);

\node (arrowtext) at (2.85,.65) { $\left( {\begin{array}{cc}
   e^t & 0 \\       0 & e^{-t} \\      \end{array} } \right)$};

\coordinate   (m1)  at ({8+(e^1)*(cos(60) -sin(60)) }  ,  {(e^(-1))*(cos(60) +sin(60)) } ) {};
\coordinate   (m2) at ({8+(e^1)*(cos(60) +sin(60)) } ,  {e^(-1)*(-cos(60) +sin(60))} ) {};
\coordinate  (m3) at ({8+(e^1)*(-cos(60) +sin(60))} ,  {(e^(-1))*(-cos(60) -sin(60))} ) {};
\coordinate  (m4) at ({8+(e^1)*(-cos(60) -sin(60))} ,  {(e^(-1))*(cos(60) -sin(60))} ) {};

\coordinate (t1) at (8,({e^(-1))*(.6)} ) {};
\coordinate (t2) at (8,({e^(-1))*(-.6)} ) {};

\filldraw (8,0)[fill=gray!40!] circle (.41);
\filldraw (8,0)[fill=white] circle ({e^(-1)*(.6)});

\node(At) at (8,-.9) {$A_t$};

\draw [->] (8.3,-.9) to  [out=20,in=350](8.27,-.1); 
\draw (t1)--(t2);

\draw (m1)--(m2);
\draw (m2)--(m3);
\draw (m3)--(m4);
\draw (m4)--(m1);
\end{tikzpicture}
\caption{Annulus $A_t$ in $Y$ with core curve $\beta_i$ and large modulus in $X_t \in \T(S)$.}
\label{fig:Modann}
\end{center}
\end{figure}
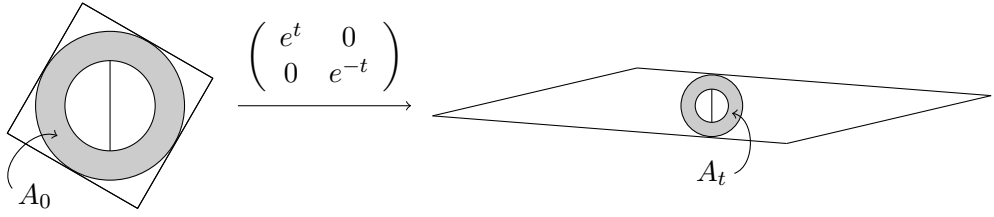

Let $A_t$ be the annulus contained in $Y$ with core curve $\beta$ and boundary components Euclidean circles in the flat $q_t$-metric as pictured  in Figure \ref{fig:Modann}. Let $r(t)$ and $R(t)$ denote the flat $q_t$-length of the inner and outer radii of $A_t$ respectively. As we move along $\G$, the flat $q_t$-length of a segment in $S$ shrinks at most exponentially. Thus, $R(t) \geq e^{-t}R(0)$.
Observe that 
\[\Mod A_t= \frac{1}{2\pi} \log \frac{R(t)}{r(t)} \geq \frac{1}{2\pi} \log \frac{e^{-t}R(0)}{\frac{1}{2} se^{-t}}=\frac{1}{2\pi} \left(\log 2R(0) +\log \frac{1}{s}\right).\]
Therefore,  provided that $s$ is sufficiently small, we have $\Ext_t(\beta) \leq \frac{\epsilon_0}{e}$. 
\end{proof}

Notice that how small $s$ must be for the conclusion of Lemma \ref{lemma:ext_beta_small} to hold is independent of $\theta$. Thus, throughout the remainder of this paper, we can and do assume the slit length $s$ is small enough to satisfy Lemma \ref{lemma:ext_beta_small}.

\begin{lemma} \label{lemma:horoproj} 
For all  $t \geq 1$ we have
\[d_{\Ho_{\beta}}(X_0, X_{t}) \stackrel{+}{\prec} \log t,\]
where the error constant depends only on $s$. 
\end{lemma}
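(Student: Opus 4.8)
The plan is to reduce the horoball distance to a single quantity, the depth coordinate $n(X_t)$, and then bound that by $\log t$. Recall that $\pi_{\Ho_\beta}(X) = \{(\tau,n(X)) : \tau \in \pi_\beta(\mu_X)\}$, where $\beta$ is short at every time by Lemma \ref{lemma:ext_beta_small} (so $\beta \in \base(\mu_{X_t})$ by Theorem \ref{theorem:collar_lemma_Ext} and each $\pi_\beta(\mu_{X_t})$ is a nonempty set of uniformly bounded diameter in $\C(\beta)$). Picking representatives $\tau_0 \in \pi_\beta(\mu_{X_0})$ and $\tau_t \in \pi_\beta(\mu_{X_t})$, I will estimate $d_{\Ho_\beta}(X_0,X_t)$ by the $\Ho_\beta$-distance from $(\tau_0, n(X_0))$ to $(\tau_t, n(X_t))$ and build an explicit path: climb the vertical fibers from level $n(X_0)$ up to level $n(X_t)$, then traverse horizontally. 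The point is that at level $n(X_t)$ each horizontal edge joins vertices within $\C(\beta)$-distance $e^{n(X_t)}$, so if $\log(1+d_\beta(X_0,X_t)) \stackrel{+}{\prec} n(X_t)$ the horizontal traverse costs only $O(1)$ edges. Since $n(X_0)$ is a fixed constant, this yields
\[
d_{\Ho_\beta}(X_0,X_t) \stackrel{+}{\prec} n(X_t),
\]
and by the definition of $n(X_t)$ we have $n(X_t) \stackrel{+}{\asymp} \log\tfrac{1}{\Ext_t(\beta)}$. Thus everything reduces to (a) justifying $\log(1+d_\beta(X_0,X_t)) \stackrel{+}{\prec} n(X_t)$ and (b) proving $n(X_t) \stackrel{+}{\prec} \log t$.

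For (a) I will control the horizontal progress using Theorem \ref{theorem:twisting_teich_geo}. Since $\beta$ is a closed leaf of $\nu^-$ with $\Ext_t(\beta) \leq \epsilon_0$, that theorem gives $d_\beta(\nu^+, X_t) \prec \frac{1}{\Hyp_t(\beta)}$. By the triangle inequality $d_\beta(X_0,X_t) \leq d_\beta(X_0,\nu^+) + d_\beta(\nu^+,X_t)$, and $d_\beta(X_0,\nu^+)$ is a fixed constant. Because $\Ext_t(\beta)$ is uniformly small, Maskit's Theorem \ref{theorem:maskit} gives $\Hyp_t(\beta) \asymp \Ext_t(\beta)$, hence $\frac{1}{\Hyp_t(\beta)} \asymp \frac{1}{\Ext_t(\beta)}$. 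Therefore $d_\beta(X_0,X_t) \prec \frac{1}{\Ext_t(\beta)}$, and taking logarithms yields $\log(1+d_\beta(X_0,X_t)) \stackrel{+}{\prec} \log\frac{1}{\Ext_t(\beta)} \stackrel{+}{\asymp} n(X_t)$, which is exactly the inequality used above.

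For (b), the heart of the argument, I will bound $\frac{1}{\Ext_t(\beta)}$ above by a linear function of $t$ using Minsky's Theorem \ref{theorem:Ext_sum_mod}: $\frac{1}{\Ext_t(\beta)} \stackrel{*}{\asymp} \Mod_t(E)+\Mod_t(F)+\Mod_t(G)$, where $F$ is the flat cylinder of closed $q_t$-geodesics homotopic to $\beta$ and $E,G$ are the two expanding annuli. Since $\beta$ is vertical, $\ell_{q_t}(\beta) = e^{-t}\ell_{q_0}(\beta)$, so a nondegenerate cylinder $F$ would have horizontally growing width and modulus $\asymp e^{2t}$; I will argue that in the slit construction the closed vertical leaves homotopic to $\beta$ are cut off by the singular leaves through the slit endpoints, so $F$ is degenerate and $\Mod_t(F)=0$. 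For the expanding annuli, each is a regular neighborhood of $\beta$ with geodesic boundary $\beta$ and no interior singularities, so Rafi's Theorem \ref{theorem:modulus} gives $\Mod_t(E) \asymp \log\frac{d}{\ell_{q_t}(\beta)}$ with $d$ its width. Bounding $d$ crudely by $\mathrm{diam}_{q_t}(S) \leq e^t\,\mathrm{diam}_{q_0}(S)$ (horizontal lengths scale by $e^t$, so the diameter grows at most exponentially), I get $\Mod_t(E) \prec \log\frac{e^t\,\mathrm{diam}_{q_0}(S)}{e^{-t}\ell_{q_0}(\beta)} \prec t$, and likewise for $G$, with additive constants depending only on $\ell_{q_0}(\beta)$ and $\mathrm{diam}_{q_0}(S)$, i.e. only on $s$. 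Hence $\frac{1}{\Ext_t(\beta)} \prec t$, so $n(X_t) \stackrel{+}{\asymp} \log\frac{1}{\Ext_t(\beta)} \stackrel{+}{\prec} \log t$, and combining with the reduction gives $d_{\Ho_\beta}(X_0,X_t) \stackrel{+}{\prec} \log t$ for $t \geq 1$.

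The main obstacle is step (b), specifically the \emph{lower} bound on $\Ext_t(\beta)$ (equivalently the upper bound on the depth $n(X_t)$). Exhibiting one fat annulus only gives an upper bound on $\Ext_t(\beta)$; to bound $\Ext_t(\beta)$ from below I must rule out \emph{all} large-modulus annuli, which is precisely what Minsky's comparison supplies, and the crucial geometric input is that the flat cylinder $F$ around $\beta$ is degenerate so that its modulus does not grow like $e^{2t}$. Verifying this degeneracy from the slit-and-glue description, and confirming that the expanding annuli fall under the hypotheses of Rafi's modulus estimate, are the delicate points; the remaining steps are routine once these are in place.
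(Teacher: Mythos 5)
Your proposal is correct and follows essentially the same route as the paper's proof: the same vertical-plus-horizontal path decomposition in $\Ho_\beta$, the same bound $d_\beta(X_0,X_t) \prec \frac{1}{\Ext_t(\beta)}$ via Theorem \ref{theorem:twisting_teich_geo} and Maskit's Theorem \ref{theorem:maskit}, and the same bound $\frac{1}{\Ext_t(\beta)} \prec t$ via Theorems \ref{theorem:Ext_sum_mod} and \ref{theorem:modulus} together with degeneracy of the flat cylinder. The only difference is cosmetic: the paper bounds the width of the expanding annulus pointing into $Y$ by half the length of the shortest $q_t$-curve in $Y$, which is uniformly bounded (Remark \ref{remark:shortcurves}), yielding $t+O(1)$, whereas your crude diameter bound yields $2t+O(1)$ --- both suffice for the stated estimate.
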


\begin{proof}
For each $t \geq 0$, define $h(t)$ and $v(t)$ so that $\pi_{\Ho_{\beta}}(X_t)=(h(t), v(t)).$ 
Recall that two vertices at height $n$ in $\Ho_{\beta}$ are adjacent if their horizontal components are within $e^n$ of each other in $\C(\beta)$.   
By Lemma \ref{lemma:ext_beta_small}, $\Ext_t(\beta) \leq \frac{\epsilon_0}{e}$ for all $t \geq 0$. So the construction of $\Ho_{\beta}$ implies $v(t) \geq 1$ and $v(t) \stackrel{+}{\asymp} \log \frac{1}{\Ext_t \beta}$ for all $t \geq 0$. These observations together with the triangle inequality imply
\begin{align*}
d_{\Ho_{\beta}}(X_0, X_{t}) \stackrel{+}{\prec} v(0)+\log \left(\frac{1}{\Ext_t\beta}\right)+ \frac{d_{\beta}(X_0, X_t)}{ e^{v(t)}-1}.
\end{align*}
To establish the desired bound on $d_{\Ho_{\beta}}(X_0, X_{t})$, our strategy is to prove the following claims, which for now we assume are true. 

\textbf{Claim 1:}  $\frac{1}{\Ext_{t}(\beta)} \prec  t$ for $t \geq 0$.

\textbf{Claim 2:}  $d_{\beta}(X_0 X_t) \prec \frac{1}{\Ext_t(\beta)}$ for  $t \geq 0$.

Claim 1 implies that $v(0)$ is bounded above uniformly and 
if $t \geq 1$, it implies that  $\log \left(\frac{1}{\Ext_t\beta}\right) \stackrel{+}{\prec} \log t$ (this is because $t$ is bounded uniformly away from 0).  
Claim 2 together with the fact that $\Ext_t(\beta) \leq \frac{\epsilon_0}{e}$ implies that 
 $\frac{d_{\beta}(X_0, X_t)}{e^{v(t)}-1 }$ is bounded above by a uniform constant.   Combining these observations, we have \[d_{\Ho_{\beta}}(X_0, X_t) \stackrel{+}{\prec} \log t \hspace{7pt} \text{ for all } \hspace{5pt}  t \geq 1.\]
Thus, all that remains is to prove the claims. 

\

\noindent \textbf{Proof of Claim  1:} Let $t \geq 0$. We consider the flat structure determined by $q_t$. The flat annulus with core $\beta$ is degenerate. 
Observe that $\ell_{q_t}(\beta)=2se^{-t}$ and that the distance between the boundary components of the expanding annulus in the direction opposite $Y$ is at most $\frac{1}{2}se^{-t}$. So by Theorem \ref{theorem:modulus},  the modulus of this expanding annulus is uniformly bounded  above. It then follows by Theorems \ref{theorem:Ext_sum_mod} and  \ref{theorem:modulus} that 
\[\frac{1}{\Ext_t(\beta)} \asymp \log \frac{d_t}{\ell_{q_t}(\beta)} =t+\log \frac{d_t}{2s},\]
where $d_t$ is the $q_t$-distance between the boundary components of the expanding annulus in the direction of $Y$ at time $t$. Now $d_t$ is at most half the length of the shortest $q_t$-length curve in $Y$ at time $t$, which is bounded above uniformly (see Remark \ref{remark:shortcurves}). 
So, we have $\frac{1}{\Ext_t(\beta)} \prec t ,$
establishing Claim 1. 
\

\noindent \textbf{Proof of Claim 2:} For $t \geq 0$
let $\mu_t$ be a short marking on $X_t$. 
Because $\Ext_t (\beta) \leq \epsilon_0$, 
we know  $\beta \in \base(\mu_t) $. This tells us $\pi_{\beta}(X_t)$ is the projection to $\C(\beta)$ of the transversal in $\mu_t$ associated to $\beta$. 
Let $\nu^+$ denote the horizontal foliation of $\G$. Because $\beta$ is a leaf of the vertical foliation of $\G$, by Theorem \ref{theorem:twisting_teich_geo}
\begin{equation} \label{eq:twist} d_{\beta}(\nu^+, X_t) \prec \frac{1}{\Hyp_t(\beta)}.
\end{equation}
Further observe that because $\Ext_t(\beta) \leq \epsilon_0$, Theorem \ref{theorem:maskit} tells us that $\frac{1}{\Hyp_t(\beta)} \stackrel{*}{\asymp} \frac{1}{\Ext_t(\beta)}$. 
So (\ref{eq:twist}) and  Claim 1 imply that
\[ d_{\beta}(X_0, X_t) \leq d_{\beta}(\nu^+,X_0)+d_{\beta}(\nu^+, X_t)  
\prec \frac{1}{\Ext_0(\beta)}+\frac{1}{\Ext_t(\beta)} \prec \frac{1}{\Ext_t(\beta)},\]
proving Claim 2 and thus  completing the proof of the lemma. 
\end{proof}

\begin{convention} \label{conv:fast} {\rm Throughout the rest of this paper, when we say the sequence $(n_i(j))_{j=1}^\infty$ \textit{grows sufficiently fast}  we shall mean that for each $k$ we have $n_i(k)$ is larger than some function of the numbers in  $(n_\ell(j))_{j=1}^{k-1}$ and $(\theta_\ell(j))_{j=1}^{k+1}$ for each $\ell\in \{0,1,2\}$,  where the function varies based on the context in which this phrase is used. }
\end{convention} 
\begin{lemma} \label{lemma:small_horo_curve_graph_ratios}
If the sequence $(n(j))_{j=1}^\infty$ grows sufficiently fast, then  
\[\frac{d_{\Ho_{\beta}}(X_{0},X_t)}{d_{Y}(X_{0}, X_t)} \rightarrow 0 \hspace{7pt} \text{as} \hspace{7pt} t\rightarrow \infty. \]
\end{lemma}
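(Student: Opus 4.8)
The plan is to collapse the statement to a single scalar limit and then to force that limit by prescribing how fast the $n(j)$ grow. First I would assemble the two estimates already in hand. Lemma \ref{lemma:horoproj} controls the numerator, giving $d_{\Ho_\beta}(X_0,X_t)\stackrel{+}{\prec}\log t$ for $t\ge 1$, and Lemma \ref{lemma:curvegraph_progress} controls the denominator, giving $d_Y(X_0,X_t)\stackrel{+}{\asymp} n$ whenever $t\in[T_{n-1},T_n]$. Since $T_n\to\infty$, sending $t\to\infty$ is the same as sending $n\to\infty$, and for such $t$ one has $\log t\le\log T_n$. Thus for all large $n$ and all $t\in[T_{n-1},T_n]$,
\[\frac{d_{\Ho_\beta}(X_0,X_t)}{d_Y(X_0,X_t)}\;\le\;\frac{\log T_n+O(1)}{n-O(1)},\]
so the whole statement reduces to proving $\log T_n/n\to 0$ as $n\to\infty$; the problem is now purely about comparing the size of $T_n$ with its index $n$.

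Second, I would convert $\log T_n$ into data about the continued fraction via Lemma \ref{lemma:heavylifting}. Writing $n=N(j-1)+\ell$ with $0\le\ell\le n(j)-1$ and abbreviating $L_j=\log\lambda(\theta(j))$ (so that $\log(1+\sqrt2)\le L_j\le\log(\theta(j)+1)$ because $\theta(j)\ge 2$), part 2 gives $T_n\stackrel{+}{\asymp}\log q_{N(j-1)}+(\ell+\tfrac12)L_j$, while telescoping part 1 across the blocks yields $\log q_{N(j-1)}\le\sum_{j'<j}n(j')L_{j'}+O(j)$. Taking logarithms and using $\log(a+b)\le\log 2+\log a+\log b$ (the case of small $\ell$ being absorbed into the first term below), I obtain, after dividing by $n=N(j-1)+\ell\ge\max(N(j-1),\ell)$,
\[\frac{\log T_n}{n}\;\prec\;\frac{\log\log q_{N(j-1)}}{N(j-1)}+\frac{\log(\ell+1)}{N(j-1)+\ell}+\frac{\log L_j}{N(j-1)}+o(1).\]

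Third, I would drive each of the three terms to $0$, prescribing the growth of $(n(j))$ as allowed by Convention \ref{conv:fast} as I go. For the first term, $\log q_{N(j-1)}\le\Theta_{j-1}N(j-1)+O(j)$ with $\Theta_{j-1}=\max_{j'<j}L_{j'}$ gives $\log\log q_{N(j-1)}\le\log N(j-1)+\log\Theta_{j-1}+O(1)$, which is $o(N(j-1))$ once $N(j-1)/\log\Theta_{j-1}\to\infty$; here $\Theta_{j-1}$ is fixed by $\theta(1),\dots,\theta(j-1)$, all known when $n(j-1)$ is chosen. The second term is at most $\log(N(j-1)+1)/N(j-1)$ when $\ell\le N(j-1)$ and at most $\log(\ell+1)/\ell$ when $\ell>N(j-1)$, and both tend to $0$ as $N(j-1)\to\infty$. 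The third term, $\log L_j/N(j-1)=\log\log\lambda(\theta(j))/N(j-1)$, is the crux: $\theta(j)$ may be enormous while the denominator $N(j-1)$ has already been fixed before block $j$ begins. The resolution — and the main obstacle in the whole argument — is exactly the look-ahead built into Convention \ref{conv:fast}: because $n(j-1)$ is permitted to depend on $\theta(1),\dots,\theta(j)$ (the ``$k+1$'' in the convention), I may require $N(j-1)\ge N(j-2)+n(j-1)$ to dominate $\log\log\lambda(\theta(j))$, forcing the third term to $0$. Geometrically this term encodes the danger that at an early time of a block with a large partial quotient the geodesic sits deep in the horoball (so $t\approx T_n$ is of size $\sim L_j$) while it has advanced only $\approx N(j-1)$ steps in $\C(Y)$; making the previous block long enough is the only way to absorb it. Combining the three estimates gives $\log T_n/n\to 0$ and hence the lemma.
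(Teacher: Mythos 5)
Your proposal is correct and takes essentially the same route as the paper's proof: both bound the numerator by $\log t \leq \log T_n$ via Lemma \ref{lemma:horoproj} and expand $T_n$ in block data via Lemma \ref{lemma:heavylifting}, bound the denominator below by $N(j-1)+\ell$ via Lemma \ref{lemma:curvegraph_progress}, and rely on the one-block look-ahead in Convention \ref{conv:fast} so that $n(j-1)$ can be chosen to dominate $\log\log\lambda(\theta(j))$. Your reduction to the scalar claim $\log T_n/n\to 0$ and the three-term decomposition simply make explicit the step the paper phrases as ``the ratio of the upper bound of (\ref{eq:horo_UB}) to the lower bound of (\ref{eq:Y_LB}) is arbitrarily small.''
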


\begin{proof}
Consider $t \geq T_{1} \geq 1$.  For some  $k \geq 1$ and $0 \leq \ell \leq n(k)-1$ we have that 
\[ T_{N(k-1)+\ell-1} < t \leq T_{N(k-1)+\ell}.\]
Regardless of how fast the $n(j)$ are growing, the following will be true. 
 Lemmas \ref{lemma:horoproj} and \ref{lemma:heavylifting} imply that 
\begin{align} \label{eq:horo_UB}
d_{\Ho_{\beta}}(X_0, X_t) \stackrel{+}{\prec} \log t &\leq \log T_{N(k-1)+\ell}  \nonumber\\
& \stackrel{+}{\prec} \log\left( \log q_{N(k-1)} +\left(\ell+\frac{1}{2}\right) \log \lambda(\theta(k))\right) \nonumber  \\
& \stackrel{+}{\prec} \log \left( \log q_{N(k-2)}+n(k-1)\log \lambda(\theta(k-1))+\left(\ell+\frac{1}{2}\right) \log \lambda(\theta(k))\right), 
\end{align}
and Lemma \ref{lemma:curvegraph_progress} implies that 
\begin{equation} \label{eq:Y_LB} 
d_{Y}(X_0, X_t) \stackrel{+}{\succ} N(k-1)+\ell =N(k-2) +n(k-1)+\ell.
\end{equation} 

Observe that $N(k-2)$, $q_{N(k-2)}$, $\lambda(\theta(k-1))$, and $\lambda(\theta(k))$ are completely determined by $(\theta(j))_{j=1}^k$ and $(n(j))_{j=1}^{k-2}$, and thus are completely independent of $n(k-1)$. Further observe that if $n(k-1)$ is sufficiently large 
relative to the numbers in $(\theta(j))_{j=1}^k$ and $(n(j))_{j=1}^{k-2}$, then the ratio of the upper bound of (\ref{eq:horo_UB}) to the lower bound of (\ref{eq:Y_LB}) is arbitrarily small, implying that $\frac{d_{\Ho_{\beta}}(X_0,X_t)}{d_{Y}(X_0, X_t)} $ is also small. 
This proves the lemma. 
\end{proof}

\section{Teichm\"uller geodesics with exotic limit sets} \label{sec:main_result}
Throughout this section, we fix $s$ sufficiently small in the sense of Lemma \ref{lemma:ext_beta_small}.
We fix a continuous map $\gamma \co\mathbb{R} \rightarrow \simp$ to the standard 2-simplex, and let $\gamma_i$ denote the $i^{th}$ component function of $\gamma$. In this section, we will show how to carefully choose infinite sequences $(\theta_i(j))_{j = 1}^{\infty}$ and $(n_i(j))_{j =1}^{\infty}$ for $i=0,1, 2$ and an embedding  $\simp \rightarrow \partial \T(S)$  so that the limit set of the associated Teichm\"uller geodesic  ray is homeomorphic to the image of  $\ol{\gamma(\mathbb{R})}$, proving Theorem \ref{theorem:main}.

 We also fix  a sequence $(t_j)_{j=1}^\infty $ in $\mathbb{R}$ so that $(\gamma(t_j))_{j=1}^\infty$ is dense in $\gamma(\mathbb{R})$ and
 \begin{equation} \label{eq:fine_seq} |\gamma_i(t_{j-1})-\gamma_i(t_j)|<\epsilon_j  \hspace{5pt} \text{ for each } \hspace{5pt}  i=0,1,2 \hspace{5pt} \text{ and } j \geq 2,
 \end{equation} 
where $(\epsilon_j)$  is some decreasing sequence, $\epsilon_j <\frac{1}{2}$, and $\lim \limits_{j \rightarrow \infty} \epsilon_j =0$. 
 
Let $L$ denote the additive error in the coarse estimates of Lemma \ref{lemma:heavylifting}.  Now for $i=0,1,2$ choose the sequence $(\theta_i(j))_{j=1}^\infty$ so that the following hold for all $j \geq 1$ and $i,\ell=0,1, 2$:

\begin{equation} \label{eq:theta_i_large_rel_L}
\log \lambda(\theta_i(j)) \geq 4L, 
\end{equation}
\begin{equation} \label{eq:shrinking_ratios}
\frac{\log \lambda(\theta_\ell(j))}{\log \lambda(\theta_i(j+1))}<\epsilon_{j+1}, 
\end{equation}
 and 
 \begin{equation} \label{eq:curve_est} 
\left|\frac{[\log \lambda(\theta_i(j))]^{-1}}{\sum \limits_{ \ell=0}^2 [\log \lambda(\theta_\ell(j))]^{-1}} -\gamma_i(t_{j})\right| < \epsilon_{j+1}.
\end{equation}

Given a sequence  $(n_i(j))_{j=1}^k$, $i=0,1,2$, we define $N_i(0)=0$ and $N_i(k)=\sum \limits_{j=1}^k n_i(j)$ for $k \geq 1$. When we say the Teichm\"uller geodesic ray corresponding to $(n_i(j))_{j=1}^\infty$, $i=0,1,2$, we shall mean the Teichm\"uller geodesic ray $\G$ with slit length $s$ associated to $(\theta_0, \theta_1, \theta_2)$, where 
\[\theta_i=[0; \underbrace{\theta_i(1), \ldots, \theta_i(1)}_{n_i(1)}, \ldots, \underbrace{\theta_i(j), \ldots, \theta_i(j)}_{n_i(j)}, \ldots].\]
We then use $\G$ to define a map $\phi \co [0,\infty) \rightarrow \bigtriangleup^2$ given by
\[ t\hspace{5pt}  \mapsto \hspace{5pt} \frac{1}{\sum \limits_{i=0}^2d_{Y_i}(X_0,X_t)}(d_{Y_0}(X_0,X_t), d_{Y_1}(X_0, X_t), d_{Y_2}(X_0, X_t)),\]
where as usual $X_t$ is the point on $\G$ distance $t$ from the base of the ray. 
We will write $\phi_i$ to denote the $i^{th}$ component of $\phi$. 

We will show how to pick the $(n_i(j))_{j=1}^\infty$ so that if $t$ is between the balance times $T^0_{N_0(k-1)-1}$ and $T^0_{N_0(k)-1}$, then for each $i$ we have $\phi_i(t)$ is close to $\frac{[\log \lambda(\theta_i(k-1))]^{-1}}{\sum \limits_{ \ell=0}^2 [\log \lambda(\theta_\ell(k-1))]^{-1}}$ or $\frac{[\log \lambda(\theta_i(k))]^{-1}}{\sum \limits_{ \ell=0}^2 [\log \lambda(\theta_\ell(k))]^{-1}}$ and thus close to $\gamma_i(t_{k-1})$ or $\gamma_i(t_k)$, which are close to each other by (\ref{eq:fine_seq}). 
 As a first step toward this goal, we prove the following lemma. (See Convention \ref{conv:fast} for our definition of sufficiently fast growth.)

\begin{lemma}\label{lemma:bound_comp_phi} For each $i=0,1,2$ suppose  $(n_i(j))_{j=1}^{\infty}$  grows sufficiently fast and that  the balance times along the  corresponding Teichm\"uller geodesic ray satisfy \[T^i_{N_i(k)-3} < T^0_{N_0(k)-1} \leq T^i_{N_i(k)-1} \hspace{20pt} \text{ for all } \hspace{4pt} k\geq 1 \hspace{4pt} \text{ and } \hspace{4pt} i=0,1,2.\] 
Then for all $k \geq 2$ and $i=0,1,2$
\begin{equation} \label{eq:bound_comp_phi}  t \in [T^0_{N_0(k-1)-1}, T^0_{N_0(k)-1}) \hspace{10pt}  \Longrightarrow  \hspace{10pt} |\phi_i(t)-\gamma_i(t_k)| \leq 11\epsilon_k.
\end{equation}
\end{lemma}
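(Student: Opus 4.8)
The plan is to obtain, for each $i=0,1,2$ and each $t$ in the interval, a two-sided estimate for $d_{Y_i}(X_0,X_t)$ that exhibits $\phi_i(t)$ as an approximate convex combination of the two ``block ratios'' $x_i:=\frac{[\log\lambda(\theta_i(k-1))]^{-1}}{\sum_\ell [\log\lambda(\theta_\ell(k-1))]^{-1}}$ and $y_i:=\frac{[\log\lambda(\theta_i(k))]^{-1}}{\sum_\ell [\log\lambda(\theta_\ell(k))]^{-1}}$. Abbreviate $u=T^0_{N_0(k-1)-1}$ (the left endpoint), $v=t-u\geq 0$, $a_i'=\log\lambda(\theta_i(k-1))$, and $a_i=\log\lambda(\theta_i(k))$. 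The heart of the argument is the estimate
\[ d_{Y_i}(X_0,X_t)=\frac{u}{a_i'}+\frac{v}{a_i}+\delta_i, \]
with $\delta_i$ negligible against $u/a_i'$. Granting this, and setting $U=u\sum_\ell a_\ell'^{-1}$ and $V=v\sum_\ell a_\ell^{-1}$, one has $u/a_i'=Ux_i$ and $v/a_i=Vy_i$, so that $\phi_i(t)$ agrees up to small error with $\frac{Ux_i+Vy_i}{U+V}$, a genuine convex combination of $x_i$ and $y_i$. Since both $x_i$ and $y_i$ will be close to $\gamma_i(t_k)$, so is any convex combination, and hence so is $\phi_i(t)$.

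To produce the estimate I would first locate each surface. The balance-time hypothesis at index $k-1$ gives $T^i_{N_i(k-1)-3}<u\le T^i_{N_i(k-1)-1}$ and at index $k$ gives $t<T^0_{N_0(k)-1}\le T^i_{N_i(k)-1}$; together with Lemma \ref{lemma:curvegraph_progress} these confine the $\C(Y_i)$-progress at time $t$ to $[N_i(k-1)-2,\,N_i(k)-1]$, so the surface is traversing block $k$ up to a bounded fuzz near the left endpoint. Thus $d_{Y_i}(X_0,X_t)=N_i(k-1)+m_i+O(1)$ where $m_i$ is the number of steps into block $k$, and Lemma \ref{lemma:heavylifting}(2) gives $t\stackrel{+}{\asymp}\log q^i_{N_i(k-1)}+(m_i+\tfrac12)a_i$, whence $m_i=\frac{t-\log q^i_{N_i(k-1)}}{a_i}+O(1)$ — here (\ref{eq:theta_i_large_rel_L}) is what converts the additive error $L$ into a bounded step-count error. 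The crucial synchronization comes from the hypothesis: the bound $T^i_{N_i(k-1)-3}<u\le T^i_{N_i(k-1)-1}$ together with Lemma \ref{lemma:heavylifting} forces $T^i_{N_i(k-1)-1}=u+O(a_i')$, and feeding this back through Lemma \ref{lemma:heavylifting}(1) and (2) yields both $\log q^i_{N_i(k-1)}=u+O(a_i')$ and $N_i(k-1)=\frac{u}{a_i'}+E_i$ with $|E_i|\le N_i(k-2)+\frac{\log q^i_{N_i(k-2)}}{a_i'}+O(1)$. Substituting and using $a_i'/a_i<\epsilon_k$ from (\ref{eq:shrinking_ratios}) produces $d_{Y_i}(X_0,X_t)=\frac{u}{a_i'}+\frac{v}{a_i}+E_i+O(1)$, i.e.\ $\delta_i=E_i+O(1)$.

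Assembling, note $U\ge u/a_i'$, so $|E_i|/U\le \frac{a_i'|E_i|}{u}$ involves only $N_i(k-2)$ and $\log q^i_{N_i(k-2)}$ relative to $u\approx n_i(k-1)a_i'$; by Convention \ref{conv:fast} we may take $n_i(k-1)$ large enough (against the earlier data) that each relative error $|E_i|/U$ and $O(1)/U$ is below any prescribed fraction of $\epsilon_k$. A standard perturbation-of-a-ratio estimate then bounds $\bigl|\phi_i(t)-\frac{Ux_i+Vy_i}{U+V}\bigr|$ by a few $\epsilon_k$. For the convex combination, (\ref{eq:curve_est}) at indices $k-1$ and $k$ gives $|x_i-\gamma_i(t_{k-1})|<\epsilon_k$ and $|y_i-\gamma_i(t_k)|<\epsilon_{k+1}<\epsilon_k$, while (\ref{eq:fine_seq}) gives $|\gamma_i(t_{k-1})-\gamma_i(t_k)|<\epsilon_k$; hence both $x_i$ and $y_i$ lie within $2\epsilon_k$ of $\gamma_i(t_k)$, so does every convex combination of them, and therefore so does $\phi_i(t)$ up to the perturbation error. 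Tallying these contributions carefully gives the stated bound $11\epsilon_k$.

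The main obstacle is the bookkeeping that turns bounded additive errors (the constant $L$ from Lemma \ref{lemma:heavylifting} and the $O(1)$ from Lemma \ref{lemma:curvegraph_progress}) into genuinely \emph{small} contributions to the ratio $\phi_i$: because the coefficient $u/a_i'$ divides by the small quantity $a_i'$, one must check that each error is small relative to $u/a_i'\approx N_i(k-1)$ rather than merely bounded, and this is exactly where (\ref{eq:theta_i_large_rel_L}), (\ref{eq:shrinking_ratios}), and the fast growth of Convention \ref{conv:fast} are used. The other delicate point is the synchronization: one must extract from $T^i_{N_i(k)-3}<T^0_{N_0(k)-1}\le T^i_{N_i(k)-1}$ that the single endpoint $u$ behaves as a common balance time for all three surfaces, so that the \emph{same} $u$ appears in every coefficient $u/a_i'$ — without this the convex-combination structure collapses. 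Pinning down the explicit constant $11$ is then a matter of allocating the $\epsilon_k$ budget across these error sources.
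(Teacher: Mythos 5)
Your proposal is correct, and it rests on the same pillars as the paper's own proof: Lemma \ref{lemma:curvegraph_progress} to convert $d_{Y_i}(X_0,X_t)$ into a block position $N_i(k-1)+m_i+O(1)$, Lemma \ref{lemma:heavylifting} to convert balance times into $\log q^i_{N_i(k-1)}$ and $\log\lambda(\theta_i(\cdot))$, the balance-time hypothesis for synchronization, and (\ref{eq:theta_i_large_rel_L}), (\ref{eq:shrinking_ratios}), (\ref{eq:curve_est}), (\ref{eq:fine_seq}) together with Convention \ref{conv:fast} to control errors. The middle of the argument, however, is executed differently. The paper never writes $d_{Y_i}(X_0,X_t)$ as an explicit function of $t$: it compares block positions across surfaces at the single time $t$, deriving the one-sided inequality (\ref{eq:m(i)}) bounding $m(i)$ below in terms of $m(\ell)$, and then uses monotonicity of $x\mapsto\frac{A+x}{B+cx}$ in $x\ge 0$ to bound $\phi_\ell(t)$ by $\frac{1}{1-\epsilon_k}\max\{A/B,\,1/c\}$, where $A/B$ is (via (\ref{eq:ratio_bounds}), (\ref{eq:ratio_bounds_2}) and fast growth) close to the block-$(k-1)$ ratio and $1/c$ is exactly the block-$k$ ratio; the lower bound is then a separate, symmetric argument. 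You instead anchor all three surfaces at the common left endpoint $u=T^0_{N_0(k-1)-1}$, prove the additive approximation $d_{Y_i}\approx u/a_i'+v/a_i$, and read off $\phi_\ell(t)$ as a perturbed convex combination $\frac{Ux_\ell+Vy_\ell}{U+V}$ of the two block ratios. Your route makes the interpolation structure transparent and yields the upper and lower bounds simultaneously, at the price of needing two-sided control of each $m_i$; in particular, since Lemma \ref{lemma:heavylifting} part 2 only applies to block indices $0\le\ell\le n(j)-1$, the ``left-endpoint fuzz'' case $m_i\in\{-2,-1\}$ needs a separate (easy) check --- there one has $v=O(a_i')$, hence $v/a_i=O(\epsilon_k)$, so the formula holds trivially --- whereas the paper sidesteps this by verifying its one-sided bound survives $m(\ell)\le 0$ directly. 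Your error accounting, with everything measured against $U\ge u/a_i'\succ n_i(k-1)$ and $n_i(k-1)$ chosen large relative to data of index at most $k$, is exactly the dependence permitted by Convention \ref{conv:fast}, so the argument closes and the $11\epsilon_k$ budget is comfortably met.
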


\begin{proof}
Let $\mathcal{G} \co [0,\infty) \rightarrow \T(S)$ be the Teichm\"uller geodesic  associated to $(n_i(j))_{j=1}^\infty, i=0,1,2$.
Fix $k \geq 2$ and $t \in [T^0_{N_0(k-1)-1}, T^0_{N_0(k)-1})$. For each $i=0,1,2$, define $ m(i)$ so that 
\begin{equation} \label{eq:def_m(i)} T^i_{N_i(k-1)+m(i)-1} < t \leq T^i_{N_i(k-1) +m(i)}.
\end{equation} 
Fix $\ell \in \{0,1,2\}$. Then  Lemma \ref{lemma:curvegraph_progress} implies that $\phi_\ell(t)$ is bounded above and below as follows:
\begin{equation} \label{eq:phi_bounds}
 \frac{N_\ell(k-1)+m(\ell)-R}{\sum \limits_{i=0}^2 (N_i(k-1)+m(i) +R)}  \leq \frac{d_{Y_\ell}(X_0, X_t)}{\sum \limits_{i=0}^2 d_{Y_i}(X_0, X_t)}  \leq  \frac{N_\ell(k-1)+m(\ell)+R}{\sum \limits_{i=0}^2 (N_i(k-1)+m(i) -R)}, 
\end{equation} 
where $R>0$ denotes the additive error from Lemma \ref{lemma:curvegraph_progress}.
Thus, to bound $\phi_\ell(t)$ we must compare $m(\ell)$ to   $m(i)$ for each $i=0,1,2$.

First, observe  our assumption that
\[T^i_{N_i(k-1)-3} < T^0_{N_0(k-1)-1} \hspace{10pt} \text{ and } \hspace{10pt} T^0_{N_0(k)-1} \leq T^i_{N_i(k)-1},\]
 implies  $-2 \leq m(i) \leq n_i(k)-1$. This means that  $m(i)+ 2\geq 0$.
Now by the definition of the  $m(i)$, we know that 
\begin{equation}  \label{eq:times}
T^\ell_{N_\ell(k-1)+m(\ell)-1} \leq T^i_{N_i(k-1)+m(i)} \leq T^i_{N_i(k-1)+m(i)+2}.
\end{equation}
So provided that $m(\ell) -1\geq 0$,  (\ref{eq:times}) together with Lemma \ref{lemma:heavylifting} part 2 implies that
\begin{equation} \label{eq:mimj_time_ineq} \log q^\ell_{N_\ell(k-1)} +\left(m(\ell)-\frac{1}{2}\right) \log \lambda(\theta_\ell(k)) -L \leq \log q^i_{N_i(k-1)} +\left(m(i) +\frac{5}{2}\right)\log\lambda(\theta_i(k)) +L.
\end{equation}
Now  (\ref{eq:mimj_time_ineq}), Lemma \ref{lemma:heavylifting} part 1, and (\ref{eq:shrinking_ratios})  imply 
\begin{align} \label{eq:m(i)} 
m(i) &\geq -\frac{5}{2} -\frac{ \log q^i_{N_i(k-1)} +2L +\frac{1}{2}\log\lambda (\theta_\ell(k))}{\log \lambda(\theta_i(k))} +\frac{\log\lambda(\theta_\ell(k))}{\log \lambda(\theta_i(k))} m(\ell)  \nonumber\\
& \geq -\frac{5}{2}- \frac{ \log q^i_{N_i(k-2)} +3L +\frac{1}{2}\log\lambda (\theta_\ell(k))}{\log \lambda(\theta_i(k))} -\epsilon_kN_i(k-1) +\frac{\log\lambda(\theta_\ell(k))}{\log \lambda(\theta_i(k))} m(\ell) \nonumber\\
&=-H_{i,\ell}(k)-\epsilon_kN_i(k-1) +\frac{\log\lambda(\theta_\ell(k))}{\log \lambda(\theta_i(k))} (m(\ell)+2),
\end{align}
where $H_{i,\ell}(k)$ is defined precisely so that the equality holds. Notice that $H_{i,\ell}(k)$ is completely determined by  the finite sequences $(n_i(j))_{j=1}^{k-2}$ and $(\theta_i(j))_{j=1}^k$ and $\theta_\ell(k)$. 
Further observe that the lower bounds given above for $m(i)$ still holds even if $m(\ell) \leq 0$ since  $m(i) \geq -2$ and $\log q^i_{N_i(k-2)}$, $L$, $\log \lambda (\theta_i(k))$, and $\log \lambda(\theta_\ell(k))$ are all greater than $0$.

We assume $n_i(j) \geq 3$ for all $i,j$ (as part of our sufficiently fast growth assumption). Lemma \ref{lemma:heavylifting} part 2 and our assumption on the balance times tell us 
\begin{equation} \label{eq:ratio_bounds}\frac{N_i(k-1)+R}{T^0_{N_0(k-1)-1}} \leq \frac{N_i(k-1)+R}{T^i_{N_i(k-1)-3}} \leq \frac{N_i(k-2)+n_i(k-1)+R}{\log q^i_{N_i(k-2)} +(n_i(k-1)-\frac{5}{2})\log \lambda(\theta_i(k-1))-L}.
\end{equation}
and
\begin{align} \label{eq:ratio_bounds_2} \frac{N_i(k-1)-2H_{i,\ell}(k)-2R}{T^0_{N_0(k-1)-1}}  &\geq \frac{N_i(k-1)-2H_{i,\ell}(k)-2R}{T^i_{N_i(k-1)-1}} \nonumber \\
&\geq \frac{N_i(k-2)+n_i(k-1)-2H_{i,\ell}(k)-2R}{\log q^i_{N_i(k-2)} +(n_i(k-1)-\frac{1}{2})\log \lambda(\theta_i(k-1))+L}. 
\end{align}
If $n_i(k-1)$ is sufficiently large, the right hand sides  of  inequalities (\ref{eq:ratio_bounds}) and (\ref{eq:ratio_bounds_2})  are both greater than $0$ and  very close to $1/\log \lambda(\theta_i(k-1))$. 
The crucial part of this proof is to notice that how large $n_i(k-1)$ must be to guarantee a certain prescribed closeness is determinable from $\theta_\ell(k)$ and the numbers in the finite sequences $(n_i(j))_{j=1}^{k-2}$ and $(\theta_i(j))_{j=1}^k$. 
Thus, if  $(n_i(j))_{j=1}^\infty$ grows sufficiently fast for each $i=0,1,2$, then 
\[0<\frac{N_\ell(k-1) +R }{\sum \limits_{i=0}^2 \left(N_i(k-1) -2H_{i,\ell}(k)-2R \right)}\leq \frac{ [\log \lambda(\theta_\ell(k-1))]^{-1}}{ \sum \limits_{i=0}^2 [\log \lambda(\theta_i(k-1))]^{-1}}+\epsilon_{k},\]
which together with (\ref{eq:m(i)}),  (\ref{eq:curve_est}),  and (\ref{eq:fine_seq})  implies that 
\begin{align*}
\frac{d_{Y_\ell}(X_0, X_t)}{\sum \limits_{i=0}^2 d_{Y_i}(X_0, X_t)} & \leq \frac{N_\ell(k-1)+m(\ell)+R}{\sum \limits_{i=0}^2 (N_i(k-1)+m(i) -R)} \\
&\leq \left( \frac{1}{1-\epsilon_k} \right) \frac{N_\ell(k-1) +R +(m(\ell)+2)}{\sum \limits_{i=0}^2 \left( N_i(k-1) -2H_{i,\ell}(k)-2R \right)+\left(\sum \limits_{i=0}^2 \frac{\log \lambda(\theta_\ell(k))}{\log \lambda(\theta_i(k))} \right)(m(\ell)+2)}  \\
& \leq \left( \frac{1}{1-\epsilon_k} \right) \max  \left\{ \frac{N_\ell(k-1) +R }{\sum \limits_{i=0}^2 \left(N_i(k-1) -2H_{i,\ell}(k)-2R \right)}, \frac{ [\log \lambda(\theta_\ell(k))]^{-1}}{ \sum \limits_{i=0}^2 [\log \lambda(\theta_i(k))]^{-1}}
\right\} \\
& \leq  \left( \frac{1}{1-\epsilon_k} \right) \max  \left\{ \frac{ [\log \lambda(\theta_\ell(k-1))]^{-1}}{ \sum \limits_{i=0}^2 [\log \lambda(\theta_i(k-1))]^{-1}}+\epsilon_{k}, \frac{ [\log \lambda(\theta_\ell(k))]^{-1}}{ \sum \limits_{i=0}^2 [\log \lambda(\theta_i(k))]^{-1}}
\right\}  \\
&\leq \left( \frac{1}{1-\epsilon_k}\right)(\gamma_\ell(t_k)+3\epsilon_k) \leq \gamma_\ell(t_k) +11\epsilon_k.
\end{align*}
 Note that the last inequality follows because  $\gamma(t_{k}) \in \simp$ implies $\gamma_\ell(t_{k})\leq 1$, and   $\epsilon_k <\frac{1}{2}$ implies  $\frac{1}{1-\epsilon_k} \leq 1 +2\epsilon_k$.

 A similar argument shows that if $(n_i(j))_{j=1}^\infty$ grows sufficiently fast for all  $i=0,1,2$, then
\[\frac{d_{Y_\ell}(X_0, X_t)}{\sum \limits_{i=0}^2 d_{Y_i}(X_0, X_t)} \geq \gamma_\ell(t) - 11\epsilon_k.\]
\end{proof}

The goal of the next lemma is to show that sequences satisfying the hypotheses of Lemma \ref{lemma:bound_comp_phi} can actually be constructed.

\begin{lemma} \label{lemma:balance_time_orders}
Let $k \geq 1$. Given $(n_i(j))_{j=1}^{k-1}$, $i=0,1,2$ and any number $N$, for each $i$ there exists  $n_i(k) \geq N$  so that the following holds. If $\theta_i$ is any irrational number whose continued fraction expansion begins with 
\[0, \underbrace{\theta_i(1), \ldots, \theta_i(1)}_{n_i(1)}, \ldots ,\underbrace{\theta_i(k), \ldots ,\theta_i(k)}_{n_i(k)}\]
for each $i$,
then the balance times on the Teichm\"uller geodesic ray  corresponding to $(\theta_0, \theta_1, \theta_2)$ satisfy
\[T^i_{N_i(k)-3} < T^0_{N_0(k)-1} \leq T^i_{N_i(k)-1} \hspace{20pt} \text{for } \hspace{4pt} i=0,1,2.\]
\end{lemma}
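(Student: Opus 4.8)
The plan is to reduce the statement to a bracketing problem for balance times and to solve it using the coarse linear growth of balance times within a single block supplied by Lemma \ref{lemma:heavylifting}. First I would record the estimates that drive everything. Write $L$ for the additive error of Lemma \ref{lemma:heavylifting}, and take each $n_i(k)$ large enough that the indices $N_i(k)-1$, $N_i(k)-2$, $N_i(k)-3$ all lie in the $k$-th block (so that $n_i(k)-3 \geq 0$). Then part 2 of that lemma gives
\[T^i_{N_i(k)-1} \stackrel{+}{\asymp} \log q^i_{N_i(k-1)} + \left(n_i(k)-\tfrac{1}{2}\right)\log\lambda(\theta_i(k)) \hspace{10pt} \text{ and } \hspace{10pt} T^i_{N_i(k)-3} \stackrel{+}{\asymp} \log q^i_{N_i(k-1)} + \left(n_i(k)-\tfrac{5}{2}\right)\log\lambda(\theta_i(k)),\]
and likewise for $T^0_{N_0(k)-1}$. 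The key observation is that, since these estimates pass through the coarse balance-time formula (\ref{eq:Tni_est}), they depend only on the convergents $q^i_m$ with $m \leq N_i(k)$, and these are determined by the initial segment of the continued fraction expansion prescribed in the hypothesis. Hence every inequality I derive holds with the same constants for all admissible $\theta_i$, which is precisely the uniformity the statement requires.

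Next I would secure the strict monotonicity that makes the argument run. Subtracting consecutive estimates, a one-step gap within the $k$-th block equals $\log\lambda(\theta_i(k))$ up to an error of size at most $2L$, so by (\ref{eq:theta_i_large_rel_L}) it is at least $4L-2L = 2L > 0$. Thus within each block the balance times increase strictly; the intervals $I_i(n) = \left(T^i_{N_i(k-1)+n-3},\, T^i_{N_i(k-1)+n-1}\right]$, viewed as functions of $n = n_i(k)$, are nondegenerate; and consecutive intervals overlap, because the left endpoint $T^i_{N_i(k-1)+n-2}$ of $I_i(n+1)$ is strictly below the right endpoint $T^i_{N_i(k-1)+n-1}$ of $I_i(n)$. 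Consequently $\bigcup_{n \geq 3} I_i(n)$ is the half-line $\left(T^i_{N_i(k-1)}, \infty\right)$.

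With this in hand the construction is direct. Choose $n_0(k) \geq N$; this fixes the target $V = T^0_{N_0(k)-1}$, and the case $i=0$ holds automatically since $T^0_{N_0(k)-3} < V \leq T^0_{N_0(k)-1}$ by strict monotonicity. For $i=1,2$, once $n_0(k)$, and hence $V$, is large, $V$ exceeds the fixed number $T^i_{N_i(k-1)}$, so $V$ lies in some interval $I_i(n)$; setting $n_i(k) = n$ gives $T^i_{N_i(k)-3} < V \leq T^i_{N_i(k)-1}$, as required. To see $n_i(k) \geq N$, note that the upper estimate for $T^i_{N_i(k)-1}$ forces $n_i(k) \succ (V - \log q^i_{N_i(k-1)})/\log\lambda(\theta_i(k))$, which tends to infinity with $V$; thus enlarging the initial choice of $n_0(k)$ pushes $n_1(k)$ and $n_2(k)$ above $N$ as well.

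I expect the main obstacle to be packaging the uniformity correctly: every estimate must be phrased so that it sees only the prescribed initial continued-fraction data and not the arbitrary tail of $\theta_i$, and the nondegeneracy and overlap of the intervals $I_i(n)$ must follow purely from the hypothesis $\log\lambda(\theta_i(k)) \geq 4L$ rather than from any finer knowledge of $\theta_i$. Once the overlap is secured the bracketing is forced, and the only remaining bookkeeping is to choose $n_0(k)$ large enough --- as a function of the already-fixed data $(n_i(j))_{j=1}^{k-1}$, the $\theta_i(j)$, and $N$ --- that all three indices exceed $N$.
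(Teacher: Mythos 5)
Your overall strategy is the same as the paper's: use Lemma \ref{lemma:heavylifting} part 2 to treat the balance times in block $k$ as a progression with step $\log\lambda(\theta_i(k))$, choose $n_0(k)$ first, and then pick $n_1(k),n_2(k)$ so that consecutive balance times of $Y_1,Y_2$ bracket the target $T^0_{N_0(k)-1}$. But your execution has a genuine gap: your intervals $I_i(n)=\bigl(T^i_{N_i(k-1)+n-3},\,T^i_{N_i(k-1)+n-1}\bigr]$ and your target $V=T^0_{N_0(k)-1}$ are built from \emph{actual} balance times, and these are not determined by the data available when you choose $n_i(k)$ --- they depend on $n_i(k)$ itself and on the arbitrary tails of $\theta_0,\theta_i$ beyond block $k$. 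So ``choose $n$ with $V\in I_i(n)$ and set $n_i(k)=n$'' is circular; and even if you resolve the circularity by defining the intervals through some reference tail, the lemma requires the two inequalities to hold for \emph{every} admissible tail with the single chosen $n_i(k)$. Each balance time is pinned down by the prescribed initial segment only up to the additive error $L$ of Lemma \ref{lemma:heavylifting}, so varying the tails can shift $V$ and the endpoints of $I_i(n)$ relative to one another by as much as $2L$; if $V$ lands within $2L$ of an endpoint of your unbuffered interval, the required inequality fails for some admissible choice of the $\theta$'s. You correctly flag this uniformity as ``the main obstacle,'' but you never supply the mechanism that overcomes it.

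That mechanism is the real content of the paper's proof. Work with the formulas $F_i(\ell)=\log q^i_{N_i(k-1)}+\left(\ell+\frac{1}{2}\right)\log\lambda(\theta_i(k))$, which \emph{are} determined by the prescribed initial segments, and insert a buffer of $L$ on each side: the paper takes $n_i(k)$ to be the \emph{smallest} integer $\ell$ satisfying $F_0(n_0(k)-1)+L\le F_i(\ell-1)-L$ (its inequality (\ref{eq:pre_time_est})). The buffered inequality immediately gives $T^0_{N_0(k)-1}\le F_0(n_0(k)-1)+L\le F_i(n_i(k)-1)-L\le T^i_{N_i(k)-1}$ for every admissible tail, while minimality gives $F_i(n_i(k)-2)<F_0(n_0(k)-1)+2L$, and then hypothesis (\ref{eq:theta_i_large_rel_L}) yields $F_i(n_i(k)-3)\le F_i(n_i(k)-2)-4L<F_0(n_0(k)-1)-2L$, hence $T^i_{N_i(k)-3}\le F_i(n_i(k)-3)+L<F_0(n_0(k)-1)-L\le T^0_{N_0(k)-1}$, again for every admissible tail. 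Note that this consumes the full strength of $\log\lambda(\theta_i(k))\ge 4L$: the shrunken intervals $\bigl(F_i(n-3)+2L,\,F_i(n-1)-2L\bigr]$ chain together without gaps exactly when the step size is at least $4L$. Your accounting only invokes $\log\lambda(\theta_i(k))-2L>0$, i.e.\ half the hypothesis, which is precisely the symptom of the missing buffers.
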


\begin{proof}
 Suppose we are given $(n_i(j))_{j=1}^{k-1}$ for each $i$ and a number $N$, which we may assume is at least $3$. Choose $n_0(k) \geq N$ so that for each $i=1,2$ if $\ell$ is an integer satisfying 
 
\begin{equation} \label{eq:pre_time_est} \log q^0_{N_0(k-1)}+\left(n_0(k)-\frac{1}{2}\right)\log \lambda(\theta_0(k))+L \leq \log q^i_{N_i(k-1)}+\left(\ell-\frac{1}{2}\right)\log \lambda(\theta_i(k))-L,
\end{equation}
then $\ell \geq N$. 
 Now for $i=1,2$ choose $n_i(k)$ to be the smallest integer $\ell$ satisfying inequality (\ref{eq:pre_time_est}). Then we have $n_i(k) \geq N \geq 3$ for all $i=0,1,2$. 
 
For each  $i=0,1,2$,  let $\theta_i$ be any irrational number whose continued fraction expansion begins with 
\[0, \underbrace{\theta_i(1), \ldots, \theta_i(1)}_{n_i(1)}, \ldots, \underbrace{\theta_i(k), \ldots, \theta_i(k)}_{n_i(k)}.\]
Consider the Teichm\"uller geodesic ray corresponding to $(\theta_0, \theta_1, \theta_2)$. Lemma \ref{lemma:heavylifting} part 2 and (\ref{eq:pre_time_est}) tells us that $T^0_{N_0(k)-1} \leq T^i_{N_i(k)-1}$ for each $i$.

We now show that because we chose $\theta_i(k)$ to be large relative to $L$, then necessarily $T^i_{N_i(k)-3} < T^0_{N_0(k)-1}$.
Observe that 
\begin{align*}
T^i_{N_i(k)-3} &\leq \log q^i_{N_i(k-1)}+\left(n_i(k)-\frac{5}{2}\right)\log \lambda(\theta_i(k))+L & \text{by Lemma } \ref{lemma:heavylifting} \text{ part }2\\
&\leq  \log q^i_{N_i(k-1)}+\left(n_i(k)-\frac{3}{2}\right)\log \lambda(\theta_i(k))-3L & \text{by Eq.(\ref{eq:theta_i_large_rel_L})}\\
& < \log q^0_{N_0(k-1)}+\left(n_0(k)-\frac{1}{2}\right)\log \lambda(\theta_0(k))-L &\text{by def. of } n_i(k) \\
& \leq T^0_{N_0(k)-1} & \text{by Lemma } \ref{lemma:heavylifting} \text{ part }2.
\end{align*}
\end{proof}

We now use Lemmas \ref{lemma:bound_comp_phi} and \ref{lemma:balance_time_orders} to prove our main result, Theorem \ref{theorem:main}, which we rephrase as Theorem \ref{theorem:main_rephrased} below. 

\begin{theorem} \label{theorem:main_rephrased}
There exists a triple of irrational numbers such that the limit set in $\partial \T(S)$ of the associated Teichm\"uller geodesic ray  is
\[\{c_1\eta_1+c_2\eta_2+c_3\eta_3:(c_1,c_2,c_3) \in \ol{\gamma(\mathbb{R})}\},\]
for some $\eta_i \in \partial \C(Y_i)$, $i=0,1,2$. 
\end{theorem}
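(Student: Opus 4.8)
The plan is to build the sequences $(n_i(j))_{j=1}^\infty$, $i=0,1,2$, by induction so that they simultaneously satisfy the ``sufficiently fast growth'' hypotheses of Lemmas \ref{lemma:small_horo_curve_graph_ratios} and \ref{lemma:bound_comp_phi} and the balance-time ordering of Lemma \ref{lemma:bound_comp_phi}, and then to identify the accumulation set of the auxiliary map $\phi$ with $\ol{\gamma(\R)}$. First I would fix, once and for all, the sequences $(\theta_i(j))_{j=1}^\infty$ so that (\ref{eq:theta_i_large_rel_L}), (\ref{eq:shrinking_ratios}), and (\ref{eq:curve_est}) hold, which is possible since at each stage those conditions only bound $\theta_i(j)$ from below in terms of data already chosen. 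With the $\theta_i(j)$ frozen, I would choose the $n_i(k)$ by induction on $k$: Convention \ref{conv:fast} guarantees that every threshold imposed by ``sufficiently fast growth'' in Lemmas \ref{lemma:small_horo_curve_graph_ratios} and \ref{lemma:bound_comp_phi} is a function only of $(n_\ell(j))_{j=1}^{k-1}$ and of the already fixed $\theta_\ell(j)$ with $j \leq k+1$. Letting $N_k$ be the maximum of these finitely many thresholds together with $3$, an application of Lemma \ref{lemma:balance_time_orders} with this $N_k$ produces $n_i(k) \geq N_k$ for \emph{every} $i$ while forcing $T^i_{N_i(k)-3} < T^0_{N_0(k)-1} \leq T^i_{N_i(k)-1}$. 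Because no threshold at stage $k$ refers to an $n_\ell(k)$, there is no circularity and the induction closes.

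Let $\G$ be the resulting Teichm\"uller geodesic ray. Lemma \ref{lemma:curvegraph_progress} supplies the points $\eta_i \in \partial \C(Y_i)$ to which $\pi_{Y_i}(X_t)$ converges, and the results of Section \ref{sec:form_acc_pt} guarantee that every accumulation point of $(X_t)$ in $\partial \T(S)$ has the form $c_0\eta_0+c_1\eta_1+c_2\eta_2$: condition (1) of the convergence criterion is automatic from $\pi_{Y_i}(X_t) \to \eta_i$, while condition (2) reduces, after dividing out, to understanding the set $L \subseteq \simp$ of subsequential limits as $t \to \infty$ of $\phi(t)=\bigl(\phi_0(t),\phi_1(t),\phi_2(t)\bigr)$, since by Lemma \ref{lemma:small_horo_curve_graph_ratios} the horoball contributions and all remaining subsurface contributions to the denominator are negligible. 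It therefore suffices to prove $L=\ol{\gamma(\R)}$, for then the limit set is exactly $\{c_0\eta_0+c_1\eta_1+c_2\eta_2:(c_0,c_1,c_2)\in\ol{\gamma(\R)}\}$.

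For $L \subseteq \ol{\gamma(\R)}$ I would use Lemma \ref{lemma:bound_comp_phi}: the half-open intervals $[T^0_{N_0(k-1)-1},T^0_{N_0(k)-1})$ for $k \geq 2$ partition $[T^0_{N_0(1)-1},\infty)$, and on the $k$-th interval each coordinate of $\phi(t)$ lies within $11\epsilon_k$ of $\gamma(t_k)$. If $t_n \to \infty$ with $\phi(t_n) \to q$, the associated indices $k_n$ satisfy $k_n \to \infty$ (the balance times increase to infinity), so $\epsilon_{k_n} \to 0$ and hence $\gamma(t_{k_n}) \to q$, giving $q \in \ol{\{\gamma(t_k)\}}=\ol{\gamma(\R)}$ by density of $(\gamma(t_j))$ in $\gamma(\R)$. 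For the reverse inclusion $\ol{\gamma(\R)} \subseteq L$ I would dispose of the case where $\gamma$ is constant (then $\phi(t)$ simply converges to the single point), and otherwise use that $\gamma(\R)$, being a continuous image of the connected set $\R$ with at least two points, is connected and hence has no isolated points. Thus for any $p \in \ol{\gamma(\R)}$ and any $\delta>0$ the open set $B(p,\delta) \cap \gamma(\R)$ is infinite, and since $\{\gamma(t_k)\}$ is dense in $\gamma(\R)$ it meets this set infinitely often; diagonalizing as $\delta \to 0$ yields indices $k_m \to \infty$ with $\gamma(t_{k_m}) \to p$. Picking any $t$ in the $k_m$-th interval and applying Lemma \ref{lemma:bound_comp_phi} once more gives $\phi(t) \to p$, so $p \in L$.

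The step I expect to be the main obstacle is the inductive construction, namely verifying that the three lemmas' fast-growth requirements and the balance-time ordering are jointly satisfiable without circular dependence. This is precisely what Convention \ref{conv:fast} and the quantifier structure of Lemma \ref{lemma:balance_time_orders}---which delivers $n_i(k) \geq N$ for all $i$ at once---are engineered to permit, so the real work lies in checking that every threshold invoked at stage $k$ depends only on data determined before $n_i(k)$ is chosen. By comparison the topological identification $L=\ol{\gamma(\R)}$ is soft, its one genuine input being the absence of isolated points in the connected image $\gamma(\R)$.
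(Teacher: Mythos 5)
Your proposal is correct and takes essentially the same route as the paper: the same inductive use of Lemma \ref{lemma:balance_time_orders} to produce sequences $(n_i(j))_{j=1}^\infty$ that simultaneously satisfy the fast-growth hypotheses of Lemmas \ref{lemma:small_horo_curve_graph_ratios} and \ref{lemma:bound_comp_phi} and the balance-time ordering, the same reduction of the limit set to the accumulation set $\mathcal{L}_\phi$ of $\phi$ in $\simp$, and the same identification $\mathcal{L}_\phi=\ol{\gamma(\mathbb{R})}$ via the estimate (\ref{eq:bound_comp_phi}). The only differences are cosmetic: you prove $\mathcal{L}_\phi \subseteq \ol{\gamma(\mathbb{R})}$ directly by subsequences where the paper uses a closed-neighborhood complement argument, and you are slightly more careful than the paper (invoking connectedness of $\gamma(\mathbb{R})$ to rule out isolated points) when extracting indices $k_m \to \infty$ with $\gamma(t_{k_m}) \to p$ for the reverse inclusion.
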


\begin{proof}

By Lemma \ref{lemma:balance_time_orders}, we can choose sequences $(n_i(j))_{j=1}^\infty$ growing sufficiently fast in  the sense of both Lemmas  \ref{lemma:small_horo_curve_graph_ratios} and \ref{lemma:bound_comp_phi} such that the corresponding geodesic ray $\mathcal{G}: [0, \infty) \rightarrow \T(S)$ satisfies
\[T^i_{N_i(k)-3} <T^0_{N_0(k)-1} \leq T^i_{N_i(k)-1} \hspace{5pt} \text{ for all  } \hspace{5pt} k \geq 1.\] 
(See Convention \ref{conv:fast} for our definition of sufficiently fast growth.)
We can now apply Lemmas \ref{lemma:small_horo_curve_graph_ratios} and \ref{lemma:bound_comp_phi}  to conclude that for each $i=0,1,2$  and $k \geq 2$

\begin{equation} \label{eq:horoprojbound} 
\lim_{t\rightarrow \infty} \frac{d_{\Ho_{\beta_i}}(X_{0}, X_t)}{d_{Y_i}(X_{0}, X_t)} =0
\end{equation}
and 
\begin{equation} \label{eq:phicompbound}
 t \in [T^0_{N_0(k-1)-1}, T^0_{N_0(k)-1}) \hspace{5pt} \Longrightarrow \hspace{5pt} |\phi_i(t)-\gamma_i(t_k)| \leq 11\epsilon_k.
\end{equation} 

Let $\cL$ denote the limit set of $\G$ in $\partial \T(S)$.  
By the definition of the topology of $\T(S)\cup \partial \T(S)$, equation (\ref{eq:horoprojbound})  and  Lemma \ref{lemma:curvegraph_progress} imply that for some $\eta_i \in \partial \C(Y_i)$
\[\cL=\{ c_1\eta_1+c_2\eta_2+c_3\eta_3:(c_1, c_2, c_3) \in \cL_\phi \},\]
where $\cL_\phi$ denotes the set of accumulation points of $\phi$ in $\simp$. 
So, to complete this proof, we must show that $\cL_\phi=\overline{\gamma(\R)}$. 

Throughout the rest of the proof, we think of $\simp$ as a subset of $\mathbb{R}^3$ equipped with the $\ell_1$-norm. 
Consider a point $P \in \overline{\gamma(\R)}$. Because $(\gamma(t_j))_{j=1}^\infty$ is a sequence dense in $\gamma(\R)$,  some subsequence $\gamma(t_{j_n}) \rightarrow P$ as $n\rightarrow \infty$. Observe that (\ref{eq:phicompbound}) tells us that $\phi(T^0_{N_0(j_n-1)-1})$ is within $33 \epsilon_{j_n}$ of $\gamma(t_{j_n})$. Since $\epsilon_{j_n}\rightarrow 0$ as $n\rightarrow \infty$, it must be that $\phi(T^0_{N_0(j_n-1)-1}) \rightarrow P$ as $n\rightarrow \infty$.  
Therefore, $P \in \cL_\phi$, which establishes that $\overline{\gamma(\R)} \subseteq \cL_\phi$. 

We now establish that $\cL_\phi \subseteq \overline{\gamma(\R)}$. 
For each $p \in \simp \setminus \overline{\gamma(\R)}$,
there exists $\epsilon >0$ such that $p$ is not contained in the closed $33\epsilon$-neighborhood of $\gamma(\mathbb{R}),$ denoted by $N_{33\epsilon}(\gamma(\mathbb{R}))$.  
Now choose $K \geq 2$ so that $\epsilon_j < \epsilon$ for all $j \geq K$.
It follows from (\ref{eq:phicompbound}) that 
\[\phi[T^0_{N_0(K-1)-1}, \infty) =\bigcup_{j=K}^\infty \phi [T^0_{N_0(j-1)-1}, T^0_{N_0(j)-1}) \subseteq \bigcup_{j=K}^\infty N_{33\epsilon_j}(\gamma(t_j))  \subseteq N_{33\epsilon}(\gamma(\mathbb{R})). \]
Therefore, $p \not \in \cL_\phi$, which establishes that $\cL_\phi \subseteq \overline{\gamma(\R)}$. 
\end{proof}


\bibliography{}

\def\cprime{$'$}
\begin{thebibliography}{10}

\bibitem{BHS}
Jason Behrstock, Mark~F. Hagen, and Alessandro Sisto.
\newblock Hierarchically hyperbolic spaces {I}: curve complexes for cubical
  groups.
\newblock {\em arXiv:1412.2171v3}, 2014.

\bibitem{BLMR}
Jeffrey Brock, Christopher Leininger, Babak Modami, and Kasra Rafi.
\newblock Limit sets of {T}eichm\"uller geodesics with minimal nonuniquely
  ergodic vertical foliation, {II}.
\newblock {\em arXiv:1601.03368v2}, 2016.

\bibitem{CMW}
Jon Chaika, Howard Masur, and Michael Wolf.
\newblock Limits in {PMF} of {T}eichm\"uller geodesics.
\newblock {\em arXiv:1406.0564v1}, 2014.

\bibitem{CRS}
Young-Eun Choi, Kasra Rafi, and Caroline Series.
\newblock Lines of minima and {T}eichm\"uller geodesics.
\newblock {\em Geom. Funct. Anal.}, 18(3):698--754, 2008.

\bibitem{Cordes}
Matthew Cordes.
\newblock Morse boundaries of proper geodesic metric spaces.
\newblock {\em arXiv:1502.04376}, 2015.

\bibitem{DHS}
Matthew~G. Durham, Mark~F. Hagen, and Alessandro Sisto.
\newblock Boundaries and automorphisms of hierarchically hyperbolic spaces.
\newblock {\em arXiv:1604.01061v2}, 2016.

\bibitem{Durham}
Matthew~Gentry Durham.
\newblock The augmented marking complex of a surface.
\newblock {\em Journal of the London Mathematical Society}, 94(3):933, 2016.

\bibitem{EMR}
Alex Eskin, Howard Masur, and Kasra Rafi.
\newblock Large scale rank of {T}eichm\"uller space.
\newblock {\em arXiv:1307.3733}, 2014.

\bibitem{GM}
Daniel Groves and Jason~Fox Manning.
\newblock Dehn filling in relatively hyperbolic groups.
\newblock {\em Israel J. Math.}, 168:317--429, 2008.

\bibitem{Kerckhoff}
Steven~P. Kerckhoff.
\newblock The asymptotic geometry of {T}eichm\"uller space.
\newblock {\em Topology}, 19(1):23--41, 1980.

\bibitem{LLR}
Christopher Leininger, Anna Lenzhen, and Kasra Rafi.
\newblock Limit sets of {T}eichm\"uller geodesics with minimal non-uniquely
  ergodic vertical foliation.
\newblock {\em arXiv:1312.2305v4}, 2013.

\bibitem{Lenzhen}
Anna Lenzhen.
\newblock Teichm\"uller geodesics that do not have a limit in
  {${\mathscr{PMF}}$}.
\newblock {\em Geom. Topol.}, 12(1):177--197, 2008.

\bibitem{LMR}
Anna Lenzhen, Babak Modami, and Kasra Rafi.
\newblock Two-dimensional limit sets of {T}eichm\"uller geodesics.
\newblock {\em arXiv:1608.07945v1}, 2016.

\bibitem{Maskit}
Bernard Maskit.
\newblock Comparison of hyperbolic and extremal lengths.
\newblock {\em Ann. Acad. Sci. Fenn. Ser. A I Math.}, 10:381--386, 1985.

\bibitem{Masur2}
Howard Masur.
\newblock On a class of geodesics in {T}eichm\"uller space.
\newblock {\em Ann. of Math. (2)}, 102(2):205--221, 1975.

\bibitem{Masur3}
Howard Masur.
\newblock Two boundaries of {T}eichm\"uller space.
\newblock {\em Duke Math. J.}, 49(1):183--190, 1982.

\bibitem{MT}
Howard Masur and Serge Tabachnikov.
\newblock Rational billiards and flat structures.
\newblock In {\em Handbook of dynamical systems, {V}ol.\ 1{A}}, pages
  1015--1089. North-Holland, Amsterdam, 2002.

\bibitem{MasurMinsky1}
Howard~A. Masur and Yair~N. Minsky.
\newblock Geometry of the complex of curves. {I}. {H}yperbolicity.
\newblock {\em Invent. Math.}, 138(1):103--149, 1999.

\bibitem{MMII}
Howard~A. Masur and Yair~N. Minsky.
\newblock Geometry of the complex of curves. {II}. {H}ierarchical structure.
\newblock {\em Geom. Funct. Anal.}, 10(4):902--974, 2000.

\bibitem{MasurWolf}
Howard~A. Masur and Michael Wolf.
\newblock Teichm\"uller space is not {G}romov hyperbolic.
\newblock {\em Ann. Acad. Sci. Fenn. Ser. A I Math.}, 20(2):259--267, 1995.

\bibitem{McPap}
John~D. McCarthy and Athanase Papadopoulos.
\newblock The visual sphere of {T}eichm\"uller space and a theorem of
  {M}asur-{W}olf.
\newblock {\em Ann. Acad. Sci. Fenn. Math.}, 24(1):147--154, 1999.

\bibitem{Minsky3}
Yair~N. Minsky.
\newblock Harmonic maps, length, and energy in {T}eichm\"uller space.
\newblock {\em J. Differential Geom.}, 35(1):151--217, 1992.

\bibitem{Minsky2}
Yair~N. Minsky.
\newblock Teichm\"uller geodesics and ends of hyperbolic {$3$}-manifolds.
\newblock {\em Topology}, 32(3):625--647, 1993.

\bibitem{Minsky4}
Yair~N. Minsky.
\newblock Extremal length estimates and product regions in {T}eichm\"uller
  space.
\newblock {\em Duke Math. J.}, 83(2):249--286, 1996.

\bibitem{Pap}
Athanase Papadopoulos, editor.
\newblock {\em Handbook of {T}eichm\"uller theory. {V}ol. {I}}, volume~11 of
  {\em IRMA Lectures in Mathematics and Theoretical Physics}.
\newblock European Mathematical Society (EMS), Z\"urich, 2007.

\bibitem{Rafi_short}
Kasra Rafi.
\newblock A characterization of short curves of a {T}eichm\"uller geodesic.
\newblock {\em Geom. Topol.}, 9:179--202, 2005.

\bibitem{Rafi_comb}
Kasra Rafi.
\newblock A combinatorial model for the {T}eichm\"uller metric.
\newblock {\em Geom. Funct. Anal.}, 17(3):936--959, 2007.

\bibitem{Rafi}
Kasra Rafi.
\newblock Hyperbolicity in {T}eichm\"uller space.
\newblock {\em Geom. Topol.}, 18(5):3025--3053, 2014.

\bibitem{RS}
Andrew~M. Rockett and Peter Sz\"usz.
\newblock {\em Continued fractions}.
\newblock World Scientific Publishing Co., Inc., River Edge, NJ, 1992.

\bibitem{Series}
Caroline Series.
\newblock The modular surface and continued fractions.
\newblock {\em J. London Math. Soc. (2)}, 31(1):69--80, 1985.

\end{thebibliography}
\bibliographystyle{plain}

\end{document}